\newtheorem{thm}{Theorem}[section]
\newtheorem{lem}[thm]{Lemma}
\newtheorem{cor}[thm]{Corollary}
\newtheorem{prop}[thm]{Proposition}
\theoremstyle{definition}
\newtheorem{example}[thm]{Example}
\newtheorem{ques}[thm]{Question}
\newtheorem{rem}[thm]{Remark}
\newtheorem{rems}[thm]{Remarks}
\numberwithin{equation}{thm}
\newcommand{\cx}{{\text{\rm cx}}}
\newcommand\Xreg{X^+_{\text{\rm reg}}}
\newcommand\Xregl{X^+_{\text{\rm reg},l}}
\newcommand{\opH}{\mbox{H}}
\newcommand{\Cx}{{\text{\rm Cx}}}
\newcommand{\Ext}{{\text{\rm Ext}}}
\newcommand{\Amod}{A\mbox{--mod}}
\newcommand{\Bmod}{B\mbox{--mod}}
\newcommand{\Hom}{\text{\rm Hom}}
\newcommand{\rad}{\operatorname{rad}}
\newcommand{\blist}{\begin{list}{\rom{(\roman{enumi})}}{\setlength
{\leftmarg in}{0em} \setlength{\itemindent}{7ex}
\setlength{\labelsep}{2ex}\setlength{\listparindent}{\parindent}
\usecounter{enumi}}}
\newcommand{\elist}{\end{list}}
\begin{document}

\title[Growth rates]{\large {\bf Cohomological growth rates and Kazhdan-Lusztig polynomials}}

\author{Brian J. Parshall}
\address{Department of Mathematics \\
University of Virginia\\
Charlottesville, VA 22903} \email{bjp8w@virginia.edu {\text{\rm
(Parshall)}}}
\author{Leonard L. Scott}
\address{Department of Mathematics \\
University of Virginia\\
Charlottesville, VA 22903} \email{lls2l@virginia.edu {\text{\rm
(Scott)}}}
\thanks{Research supported in part by the National Science
Foundation} \subjclass{Primary 20G05, 05E10}
\begin{abstract}In previous work, the authors established various bounds for the dimensions of degree
$n$ cohomology and $\Ext$-groups, for irreducible modules of semisimple algebraic groups $G$ (in positive characteristic $p$)  and (Lusztig) quantum groups $U_\zeta$ (at roots of unity $\zeta$).
These bounds depend only on the root system, and not on the characteristic $p$ or the size of the
root of unity $\zeta$. This paper investigates the rate of growth  of these bounds. Both in the quantum and algebraic group situation, these rates of growth represent new and fundamental invariants
attached to the root system $\Phi$. For quantum groups $U_\zeta$ with a fixed $\Phi$, we show the sequence
  $\{\max_{L\,{\text{irred}}}\dim \opH^n(U_\zeta,L)\}_n$ has polynomial
growth independent of $\zeta$. In fact, we provide upper and lower bounds for the polynomial growth rate. Applications
of these and related results for $\Ext^n_{U_\zeta}$ are given to Kazhdan-Lusztig polynomials.
Polynomial growth in the algebraic group case remains an open question, though it is proved that $\{\log\max_{L\,\text{\rm irred}}\dim\opH^n(G,L)\}$
has polynomial growth $\leq 3$ for any fixed prime $p$
(and $\leq 4$ if $p$ is allowed to vary with $n$).
 We indicate the relevance of these issues to (additional structure for) the constants proposed in the theory of higher cohomology groups for finite simple groups with irreducible coefficients by Guralnick, Kantor, Kassabov, and
Lubotzky \cite{GKKL}.

\end{abstract}

\maketitle
\section{Introduction}A sequence $\{s_n\}_{n=0}^\infty$ of complex numbers is said to have polynomial rate of growth if there
  exists a non-negative integer $d$ and a constant $C$ such that
\begin{equation}\label{rateofgrowth}|s_n|\leq C\cdot n^{d-1},\quad\forall n\gg 0.\end{equation}
When such a $d$ exists, the smallest is denoted $\gamma(\{s_n\})$ and is called
the (polynomial) rate of growth of the sequence.

In previous work \cite{PS5}, the authors established various bounds $s_n$ for the dimensions of degree $n$ cohomology
and Ext-groups, for irreducible modules of semisimple algebraic groups (in characteristic $p>0$) and quantum groups (at $l$th roots of unity). These bounds depend only on the
root system $\Phi$ (and not on the characteristic $p$ or the size of the root of unity). This paper addresses the issue of the rates of growth of these bounds.

There is a natural, broader context---``complexity theory"---for  rate of growth issues for cohomology or $\Ext$-groups. In a complexity
theory, one is interested in the rate of growth of cohomological invariants associated with individual modules rather than
with rates of growth of bounds on such invariants. The original example appears in the representation theory of finite groups in the work of Alperin and Evens \cite{AE}:
Let $k$ be an (algebraically closed) field. Let $G$ be a finite group and let $V$ be a finite dimensional $kG$-module.
Choosing a minimal projective resolution $\cdots\to P_n\to\cdots\to P_1\to P_0\twoheadrightarrow V$, the complexity $\cx(V)$
of $V$ was defined in \cite{AE} to be the rate of growth of the sequence $\{\dim P_n\}$. Equivalently, $\cx(V)$ equals
the rate of growth of the Ext-sequence $\{\dim\Ext^n_{kG}(V,V)\}$. As another interpretation, $\cx(V)$ equals the Krull dimension of the cohomological
support variety of $V$, the latter defined by a suitable homogeneous ideal in ${\opH}^{2\bullet}(G,k)$ (or in ${\opH}^\bullet(G,k)$,
if $k$ has characteristic 2).

The Alperin and Evens work \cite{AE} marked the start of a rich theory in representation theory.
In addition, the above definition of complexity led to  successful theories  in the category of
of finite dimensional modules
for a restricted enveloping algebra in positive characteristic,  for a finite group scheme $G$, and  for the little quantum
group $u_\zeta$ attached to a Lusztig quantum enveloping algebra $U_\zeta$ at a root of unity for a complex simple Lie
algebra, etc. For example, see \cite{FP}, \cite{Ost}, and \cite{FPe}, respectively.

If, however, $G$ is a connected, semisimple group over an algebraically closed field
$k$ of positive characteristic, then nonzero finite dimensional projective (or injective) rational $G$-modules do not
exist. A definition of the complexity of a finite dimensional rational $G$-module $V$ by means of the dimensions of a minimal
projective or injective resolution is thus not possible.\footnote{One possible alternative definition of the complexity
of $V$, equivalent in the case of
finite groups \cite{Ben}, is the rate of growth of the sequence defined by setting  $\gamma_m(V)=\max_{L\,{\text{irred}}}\dim\Ext^m_G(V,L)$. This sequence is discussed in \S6.} Furthermore, $\Ext^n_G(V,V)=0$ for $n\gg 0$, so that the rate
of growth of the Ext-sequence is uninteresting. Finally, the vanishing ${\opH}^{>0}(G,k)=0$ rules out an approach
via cohomological support varieties.

The situation for the quantum enveloping algebra $U_\zeta$, with $\zeta$ a root of unity, is similar, but more tractable. Here, every finite dimensional $U_\zeta$-module $V$
 has a finite dimensional projective cover. Also, $V$ has a minimal projective resolution $P_\bullet\twoheadrightarrow V$, so that $\gamma(\{\dim P^n\})$ makes sense. It is easy to see, however, that $P_\bullet$ is also a minimal projective resolution of
$V$ as a module for the little quantum group $u_\zeta$, so that this approach, while not uninteresting, just recovers the complexity
of $V|_{u_\zeta}$. As with the semisimple group $G$, $\Ext^n_{U_\zeta}(V,V)=0$ for $n\gg 0$. In addition, ${\opH}^\bullet(U_\zeta,{\mathbb C})=0$. The other two approaches thus fail to bear fruit.

It is nevertheless possible to introduce new notions of complexity for $U_\zeta$ and to establish quite strong upper and lower bounds concerning the rates of growth of dimensions of cohomology spaces
 for $U_\zeta$.  As shown in \cite[Thm. 6.5]{PS5}, the category $U_\zeta$-mod has remarkable (but elementary) cohomological boundedness properties:
 Given a non-negative integer $n$,  there is a positive constant $C'(\Phi,n)$ such that the following holds. If $U_\zeta$ is
 the (Lusztig) quantum enveloping algebra at an $l$th root of unity associated with the complex semisimple Lie algebra with root system $\Phi$,\footnote{It is required that $l>h$ (the Coxeter number) be odd and not divisible by $3$, if $\Phi$ has
a component of type $G_2$. See also footnote 5 below.}
 then, for any dominant weight $\lambda$,
\begin{equation}\label{start}\sum_{\nu}\dim\Ext^n_{U_\zeta}(L_\zeta(\lambda),L_\zeta(\nu))\leq C'(\Phi,n),\end{equation}
where the sum is over all dominant weights $\nu$.

The first main result of the present paper,
Theorem \ref{maintheorem}, establishes an upper bound of $|\Phi|$ on the rate of growth of the sequence $\{s_n\}$ defined as the minimal $C'(\Phi,n)$ that works in (\ref{start}). This result may be interpreted as giving a strong and universal (depending on $\Phi$, but not on $l$) upper bound of $|\Phi|$ on the complexity $\Cx(L_\zeta(\lambda))$ given
by the rate of growth of the left-hand side of (\ref{start}). Obviously, $\Cx(L_\zeta(\lambda))\geq\cx(L_\zeta(\lambda))$,
where the latter is the rate of growth of a sequence like that on the left-hand side of (\ref{start}), but with the sum over $\nu$
replaced by a maximum over $\nu$. We do not know if the upper bound of $|\Phi|$ on $\Cx(L_\zeta(\lambda))$ is sharp, but we
show, in Theorem \ref{lowerbound}, that $\cx(L_\zeta(\lambda))$ may be as large as $\text{rank}(\Phi)-1$ as long as $\text{rank}(\Phi)\geq 3$. (In Example \ref{example}, however, we
work out the precise bounds in case $\Phi$ has type $A_1$. Remarkably, even higher degree $\Ext$-groups between irreducible modules
are either 0- or 1-dimensional, and can be explicitly calculated; see Proposition \ref{last prop}.) For $u_\zeta$-complexity, as mentioned above, $|\Phi|$ is a sharp bound if $l>h$.

These results have natural consequences for Kazhdan-Lusztig polynomials, since the coefficients of certain of these
polynomials are precisely the dimensions of certain $\Ext$-groups for $U_\zeta$; see \cite[Thm. 3.9.1]{CPS1} . The polynomials in question are Kazhdan-Lusztig polynomials  for
the affine Weyl group $W_a$ of $\Phi$ which are parabolic with respect to the ordinary Weyl group $W$.\footnote{In our context,
this just means here
that they are Kazhdan-Lusztig polynomials
 $P_{w_0y,w_0x}$ for $W_a$, with $x,y\in W_a$ and  $\ell(w_0y)=\ell(w_0)+\ell(y)$ and $\ell(w_0x)=\ell(w_0)+\ell(x)$, where $w_0$ is the long word in $W$. These are the polynomials appearing in Lusztig's celebrated modular conjecture.} Indeed, the representations of quantum groups at
 a root of unity may be viewed as a conceptual fleshing out of the combinatorics of these difficult to compute polynomials. In particular, Theorem 4.1  draws on  the
results described above to bound above and below the rate of growth of certain sequences of coefficients of Kazhdan-Lusztig
polynomials. Section 5 explores an alternative approach to understanding these coefficients,
making use of the Koszulity results \cite{PS4} of certain $\Ext$-algebras associated with $U_\zeta$. Theorem 5.1
codifies some of this discussion in the language of a triangular decomposition of this Ext-algebra. We hope these
results will be of interest to combinatorial theorists as well as researchers in representation theory.

Finally, \S6 considers the case of a semisimple, simply connected algebraic group $G$ over an algebraically closed field $k$ and with a fixed root system
 $\Phi$. In many of our considerations, we will allow the field $k$ to vary and, in particular, to have different
 characteristic $p>0$. A previous result \cite[Thm. 7.1]{PS5}  established that there exists a constant, depending only on $\Phi$, and not on $p$, bounding all $\dim_{\text{\rm L irred}}\dim\opH^n(G,L)$.\footnote{As already observed in \cite[Rem. 7.5(a)]{PS5},
the sums analogous to (\ref{start}) are not necessarily bounded in the category of rational $G$-modules, though the dimension for $\lambda$ and $m$ fixed and $\nu$ arbitrary of an individual group $\Ext^m_G(L(\lambda),L(\nu))$  is bounded \cite[Thm. 7.1]{PS5}.  Also, a more restrictive result regarding sums does hold for $m=1$; see \cite[Thm. 5.4]{PS5}.} Theorem \ref{log} proves that
the sequence $\{\log\max_{L\,\text{\rm irred}}\dim {\opH}^n(G,L)\}$ (with $p$ allowed to vary) has polynomial growth $\leq 4$. (In fact, a somewhat
stronger result involving $\Ext$-groups is obtained.) Interestingly, if $p$ and $\Phi$ are fixed, this sequence has
rate of growth $\leq 3$.
However, Question
\ref{Gcase} asks whether $\{\max_{L\,\text{\rm irred}}\dim {\opH}^n(G,L)\}$ has polynomial
growth, depending only on $\Phi$ and not on $p$.  The cohomology groups $\opH^n(G,L)$ are relevant, by way of the theory
 of generic cohomology \cite{CPSK}, to the asymptotic study of higher cohomology
groups of finite simple groups with irreducible coefficients. Our considerations here suggest possible new structure to the constants
proposed in \cite[Ques. 12.1]{GKKL}; see the discussion above Question \ref{Gcase}.
Remark \ref{completeness} raises additional issues related to Question \ref{Gcase} and
points out a possible ``completeness" property, quite analogous
to completeness in the complexity theory of theoretical computer
science.
Section 6 closes with some further growth related questions regarding the multiplicative structure of $\Ext$-algebras associated with
families of irreducible $G$-modules.

The rates of growth considered in this paper represent fundamental
invariants attached to the root system $\Phi$ which, as far as the authors know, have never before been investigated. In fact, such issues make
no sense without the recent boundedness results of \cite{PS5}. Our results  generally address here only the issue of bounds for
 rates of growth,  but the authors hope
they will spark interest in this topic.
\section{Some notation and preliminaries} Let $\Phi$ be a finite and irreducible  root system, spanning a Euclidean space $\mathbb E$ with inner
product $(u,v)$, $u,v\in \mathbb E$. Fix a set $\Pi=\{\alpha_1,\cdots,\alpha_{\text{\rm{rank}},G}\}$ of simple roots, and let $\Phi^+$
be the corresponding set of positive roots.
Let $X^+\subseteq\mathbb E$ be the set of dominant weights; thus,
$\lambda\in\mathbb E$ belongs to $X^+$ if and only if $(\lambda,\alpha^\vee)\in{\mathbb N}$ (the non-negative integers),
for all $\alpha\in\Pi$. Here $\alpha^\vee=\frac{2}{(\alpha,\alpha)}\alpha$ is the coroot defined by $\alpha$.
The fundamental dominant weights $\varpi_1,\cdots,\varpi_r$ are the elements in $X^+$ defined by $(\varpi_i,\alpha_j^\vee)=
\delta_{ij}$, for $1\leq i,j\leq r$. Let $\rho=\varpi_1+\cdots+\varpi_r$ (Weyl weight). The weight lattice is defined
to be $X:={\mathbb Z}\varpi_1\oplus \cdots\oplus{\mathbb Z}\varpi_r\subset{\mathbb E}$. Let
$\alpha_0\in\Phi^+$ be the maximal short root, and let $h=(\rho,\alpha_0^\vee)+1$ be the Coxeter number. For $\lambda\in X^+$,
let $\lambda^\star:=-w_0(\lambda)\in X^+$, where $w_0$ is the element of maximal length in the Weyl group $W$ of $\Phi$.

Let $W\subset{\mathbb O}({\mathbb E})$ be the Weyl group of $\Phi$. For $\alpha\in\Phi$,  $s_\alpha\in W$
 is the reflection defined by $s_\alpha(u)=u-(u,\alpha^\vee)\alpha$, $u\in\mathbb E$. Setting $S:=\{s_\alpha\}_{\alpha\in\Pi}$,
$(W,S)$ is a Coxeter system. Given an integer $n$ and $\alpha\in\Phi$, let $s_{\alpha,n}:{\mathbb E}\to{\mathbb E}$
be the affine transformation given by $s_{\alpha,n}(u)=u-\left((u,\alpha^\vee)-n\right)\alpha$, $u\in\mathbb E$. Thus,
$s_\alpha=s_{\alpha,0}$. Let
$S_a:=S\cup\{s_{\alpha_0,-1}\}$, and let $W_a$ be the group of affine transformations on $\mathbb E$ generated by
the $s_{\alpha,n}$. Then $(W_a,S_a)$ is also a Coxeter system containing $W$ as a parabolic subgroup. For $(y,x)\in W_a\times W_a$,
let $P_{y,x}$ be the corresponding Kazhdan-Lusztig polynomial in a variable $t$; thus, $P_{1,1}=1$ and, if $x\not=y$, $P_{y,x}$ is a polynomial of degree $\leq \ell(x)-\ell(y)-1$. (In addition, $P_{y,x}$ is a polynomial in $q:=t^2$.)  Also, $P_{y,x}=0$ unless $y\leq x$ in the Bruhat-Chevalley ordering
on $W_a$.

Suppose that $l$ is a positive integer. Let $W_{a,l}$ be the subgroup of $W_a$ generated by the $s_{\alpha,n}$ with
 $n\equiv 0$ mod $l$. Thus, $W_{a,1}=W_a$. If $S_l=S\cup\{s_{\alpha_0,-l}\}$, $(W_{a,l},S_l)$ is a Coxeter system as
 well. Given $(y,x)\in W_{a,l}\times W_{a,l}$, let $P_{y,x,l}$ be the Kazhdan-Lusztig polynomial for this Coxeter system.  We will usually denote $P_{y,x,l}$ simply by $P_{y,x}$ since it will be
 clear from context which Coxeter group is being used. Also, the polynomials associated with
 $(y,x)$ with the ``same" expression
in terms of generating reflections are the same polynomials, and were even
originally defined for abstract Coxeter groups with a given set of generating reflections.

  The closure $\overline{C^-_l}$
 of the simplex $C^-_l:=\{x\in{\mathbb E}\,|\,(x+\rho,\alpha^\vee)<0, (x+\rho,\alpha_0^\vee)>-l\}$ is a fundamental domain for the ``dot" action of $W_{a,l}$ on $\mathbb E$, defined by
 $w\cdot x:=w(x+\rho)-\rho$.
 A weight $\lambda\in X$
is $l$-regular provided that, for all $\alpha\in\Phi$, $(\lambda+\rho,\alpha^\vee)\not\equiv 0$ mod$\,l$. The $l$-regular
weights all have the form $w\cdot\lambda^-$ for some $\lambda^-\in C^-_l\cap X$.  Such
weights exist if and only if $l\geq h$. Let $X_{\text{\rm reg},l}$ be the set of all $l$-regular weights, and
$\Xregl$ the subset of all $l$-regular weights which are dominant.

Each right coset $Wy$ of $W$ in $W_{a,l}$ has a unique element of maximal length. If $w_0\in W$ is the long word and if $y$
is the element of maximal length in $Wy$, then $w_0y$ is the unique element of minimal length in this right coset (usually,
one calls $w_0y$ a ``distinguished" right coset representative). Let $W_{a,l}^+$ be the set of longest right coset
representatives of $W$ in $W_{a,l}$. If $\lambda\in\Xregl$, then there exists unique $w\in W^+_{a,l}, \lambda^-\in C_l^-\cap X$
such that $\lambda=w\cdot\lambda^-$.

For any positive integer $l$, there is an isomorphism $\epsilon_l:W_a\overset\sim\to W_{a,l}$ in which $\epsilon_l(s_{\alpha,n})=s_{\alpha,nl}$. Clearly, $\epsilon_l$ maps $W^+_a$ onto $W^+_{a,l}$. If $x,y\in W_a\times W_a$,
$P_{y,x}=P_{\epsilon_l(y),\epsilon_l(x),l}$.

Let $l$ be an odd integer, not divisible by 3 in case $\Phi$ has type $G_2$. Let $\zeta=\sqrt[l]{1}$ be a primitive $l$th root of unity. Let ${\mathfrak g}_{\mathbb C}$ be the complex simple Lie algebra with root system $\Phi$. It universal enveloping
algebra is denoted $U=U({\mathfrak g}_{\mathbb C})$. In addition, $U_\zeta=U_\zeta({\mathfrak g}_{\mathbb C})$ denotes the (Lusztig) quantum enveloping
algebra associated with $\mathfrak g$ at the root of unity $\zeta$.\footnote{When speaking of the quantum enveloping algebra
$U_\zeta$, we will always assume
 $l$ is odd, and not divisible by $3$ in case $\Phi$ has a component of type $G_2$. In addition, a $U_\zeta$-module will always be taken to be of
 type 1 and integrable. We do not know if the conditions on $l$ can be eliminated in this paper. Such an elimination seems particularly likely
  in type $A$, since, for example, the theory of $q$-Schur algebras works nicely in that case.} The algebras $U_\zeta$ and $U$ admit the structure of
 Hopf algebras over $\mathbb C$, and they are related by the Frobenius morphism $F:U_\zeta\twoheadrightarrow U$ with Hopf kernel $u_\zeta=u_\zeta({\mathfrak g})$ (the ``little quantum group" of $U_\zeta$). Thus, $u_\zeta$ is a normal (Hopf)
subalgebra of $U_\zeta$, and $U_\zeta//u_\zeta\cong U$.

If $\lambda\in X^+$, $L_\zeta(\lambda)$ (resp.,
$L_{\mathbb C}(\lambda)$) is the irreducible $U_\zeta$ (resp., $U$) module of highest weight $\lambda$.
Given a (finite dimensional) $U$-module $V$, let $V^{(1)}=F^*V$ be its pull-back to $U_\zeta$ through
$F$. In particular, $L_{\mathbb C}(\lambda)^{(1)}\cong L_\zeta(l\lambda)$. Also, for $\lambda\in X^+$, let $\nabla_\zeta(\lambda)$ (resp.,
$\Delta_\zeta(\lambda)$) be the costandard (resp., standard) $U_\zeta$-module of highest weight $\lambda$.

If $M$ is a $U_\zeta$-module on which $u_\zeta$ acts trivially, then $M$ carries a natural $U$-module
structure. We denote this $U$-module by $M^{(-1)}$; it is an object in the category of locally finite
$U$-modules. Because this latter category is semisimple, an elementary Hochschild-Serre spectral sequence
argument shows that
\begin{equation}\label{spectralsequence}E^{a,b}_2 =\Hom_{U}({\mathbb C},{\opH}^n(u_\zeta,M)^{(-1)})\Rightarrow
{\opH}^n(U_\zeta,M),\end{equation}
for any $U_\zeta$-module $M$. A similar spectral sequence holds for  $\Ext^n_{U_\zeta}(M,N)\cong
{\opH}^n(U_\zeta, M^\star\otimes N)$ for $U_\zeta$-modules $M,N$ with $M$ finite dimensional.

Suppose $L,M$ are finite dimensional $U$-modules with $L$ irreducible. The following useful fact is used below. Its proof
is elementary; see \cite[Lemma 6.1]{PS5}. (Recall that $U=U({\mathfrak g}_{\mathbb C})$ is the universal enveloping
algebra of ${\mathfrak g}_{\mathbb C}$.)

\begin{lem}\label{usefulfact} The length (= number of composition factors) of the $U$-module
$L\otimes M$ is at most $\dim\,M$. \end{lem}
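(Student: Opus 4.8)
The plan is to reduce the assertion to a single injectivity claim for highest-weight vectors. Since $U=U(\fg_{\mathbb C})$ is the enveloping algebra of a semisimple Lie algebra, every finite-dimensional $U$-module is completely reducible; writing $L\otimes M\cong\bigoplus_\mu L_{\mathbb C}(\mu)^{\oplus a_\mu}$, the length to be bounded is $\sum_\mu a_\mu$. As each finite-dimensional irreducible $L_{\mathbb C}(\mu)$ has a one-dimensional space of $\fn^+$-invariants (namely its highest weight space), $\sum_\mu a_\mu=\dim(L\otimes M)^{\fn^+}$, and so it suffices to prove $\dim(L\otimes M)^{\fn^+}\leq\dim M$.

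Let $\lambda$ be the highest weight of $L$, fix $0\neq v^+\in L_\lambda$, and let $\varphi\in L^*$ be the functional vanishing on every weight space $L_\eta$ with $\eta\neq\lambda$ and with $\varphi(v^+)=1$. Put $\pi:=\varphi\otimes\id_M\colon L\otimes M\to M$; informally $\pi$ reads off the ``$v^+$-component'' of an element of $L\otimes M$, and it carries the weight-$\mu$ subspace of $L\otimes M$ into $M_{\mu-\lambda}$. The key claim is that $\pi$ is injective on $(L\otimes M)^{\fn^+}$; granting this, $\dim(L\otimes M)^{\fn^+}\leq\dim M$ and the lemma follows. (Being weight-graded, $\pi$ in fact yields the finer estimate $[L\otimes M:L_{\mathbb C}(\mu)]\leq\dim M_{\mu-\lambda}$, which sums over $\mu$ to $\dim M$.)

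The heart of the argument, and the step I expect to require the most care, is this injectivity, which I would handle as follows. Suppose $0\neq w\in(L\otimes M)^{\fn^+}$ with $\pi(w)=0$. Fixing a weight basis $\{v_a\}$ of $L$ containing $v^+$, write $w=\sum_a v_a\otimes m_a$, and for each weight $\eta$ of $L$ let $w_\eta$ be the component of $w$ in $L_\eta\otimes M$. Since $\pi(w)=0$ is precisely the vanishing of $w_\lambda$, the set of $\eta$ with $w_\eta\neq0$ is nonempty and consists of weights strictly below $\lambda$; choose such an $\eta$ that is maximal. Write $w_\eta=\sum_{j=1}^{s}u_j\otimes n_j$ with $u_1,\dots,u_s\in L_\eta$ linearly independent and $n_1,\dots,n_s\in M$ linearly independent, where $s\geq 1$. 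For each simple root vector $e_\alpha\in\fn^+$, use the relation $0=e_\alpha w=\sum_a(e_\alpha v_a\otimes m_a+v_a\otimes e_\alpha m_a)$ and extract the part lying in $L_{\eta+\alpha}\otimes M$: by maximality of $\eta$ the vector $w$ has no component in $L_{\eta+\alpha}\otimes M$, so the summands $v_a\otimes e_\alpha m_a$ contribute nothing to this part, and what remains is $(e_\alpha\otimes\id_M)(w_\eta)=\sum_j(e_\alpha u_j)\otimes n_j$, which must therefore vanish. Linear independence of the $n_j$ then forces $e_\alpha u_j=0$ for all $j$ and all simple $\alpha$, so the nonzero vector $u_1$ is an $\fn^+$-invariant of the irreducible module $L=L_{\mathbb C}(\lambda)$ and hence lies in $L_\lambda$. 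This contradicts $\eta<\lambda$, establishing the claim and with it the lemma.
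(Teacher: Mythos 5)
Your proof is correct, and it is exactly the kind of elementary argument the paper has in mind: the paper does not reprove the lemma but cites \cite[Lemma 6.1]{PS5}, and your reduction to counting $\fn^+$-invariants followed by injectivity of the ``$v^+$-coefficient'' map $\pi$ on $(L\otimes M)^{\fn^+}$ (via a maximal-weight component argument) is the standard proof of that fact, even yielding the finer bound $[L\otimes M:L_{\mathbb C}(\mu)]\leq\dim M_{\mu-\lambda}$. No gaps: complete reducibility justifies the count, maximality of $\eta$ kills the $(1\otimes e_\alpha)$-contribution in $L_{\eta+\alpha}\otimes M$, and irreducibility of $L$ forces the contradiction.
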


We will use below the fact that the dimensions of certain important $U_\zeta$-cohomology spaces are determined in
terms of Kazhdan-Lusztig polynomials. Suppose that $l>h$ is odd (and not divisible by 3 in case $\Phi$ has type $G_2$).
Fix some $\lambda^-\in C^-_l\cap X$.   Then for $x,y\in W^+_{a,l}$, letting $P_{y,x}$ be the Kazhdan-Lusztig polynomial
for the pair $(y,x)$ in the Coxeter group $W_{a,l}$, we have
\begin{equation}\label{KL1}
\begin{aligned}P_{y,x} &=t^{\ell(x)-\ell(y)}\sum_n\dim\Ext_{U_\zeta}^n(L_\zeta(x\cdot\lambda^-),\Delta_\zeta(y\cdot\lambda^-))t^{-n}\\
&=t^{\ell(x)-\ell(y)}\sum_n\dim\Ext^n_{U_\zeta}(\Delta_\zeta(y\cdot\lambda^-),L_\zeta(x\cdot\lambda^-))t^n.\end{aligned}
\end{equation}
While the above formulas show that certain important $\Ext$-spaces have dimensions determined by coefficients
of Kazhdan-Lusztig polynomials, there is also a converse effect: the representation theory of quantum groups
can viewed as an ``interpretation" of Kazhdan-Lusztig polynomials, placing their purely combinatorial definition
into a richer conceptual context. Indeed, we will show in this paper that properties of the representation
theory of the quantum groups $U_\zeta$ have consequences for Kazhdan-Lusztig polynomials.

One particularly fruitful aspect of this interaction begins with the following formula. For $x,y\in W^+_{a,l}$, and $n\geq 0$,
\begin{equation}\label{KL2}\begin{aligned}
\dim&\Ext^n_{U_\zeta}(L_\zeta(x\cdot\lambda^-),L_\zeta(y\cdot\lambda^-))\\ &=\sum_{z\in W^+_{a,l}}\sum_{a+b=n}\dim\Ext_{U_\zeta}^a(L_\zeta(x\cdot
\lambda^-),\nabla_\zeta(z\cdot\lambda^-))\cdot\dim\Ext^b_{U_\zeta}(\Delta_\zeta(z\cdot\lambda^-),L_\zeta(y\cdot\lambda^-)).
\end{aligned}\end{equation}
The Lusztig character formula holds for $U_\zeta$ (see, e.~g., \cite[\S7]{T}), so  \cite[Thm. 3.9.1]{CPS1} is applicable.\footnote{Unfortunately, no such formula is known for singular dominant weights. However, it is
still possible to obtain upper bounds using a similar expression (when $l\geq h$), using derived category methods (see the earlier
 version of \cite{PS5} posted on arXiv) or translation
operators.}
Making use of the isomorphism $\epsilon_l:W^+_a\overset\sim\to W^+_{a,l}$, it follows that, as long as attention is
restricted to $l$-regular weights, questions about bounds on the dimensions of the $\Ext^n_{U_\zeta}$-spaces above
are largely independent of $l$. In particular, it is convenient to check things for a particular $l$, and then these results hold for all $l$. For example, the case $l=p$, a prime, is useful, because the representations of $U_\zeta$ are then related to those
of the simple algebraic group $G$ of the same root type in characteristic $p$.

\section{Complexity} Define the complexities $\cx(U_\zeta)$ and $\text{Cx}(U_\zeta)$ by
\begin{equation}\label{complex}
\begin{cases} \cx(U_\zeta):=\gamma(\{\max_{\lambda,\nu\in X^+}\dim\Ext^n_{U_\zeta}(L_\zeta(\lambda),L_\zeta(\nu))\});\\
\Cx(U_\zeta)=\gamma(\{\max_{\lambda\in X^+}\sum_{\nu\in X^+}\dim\Ext^n_{U_\zeta}(L_\zeta(\lambda),L_\zeta(\nu))\}).
\end{cases}\end{equation}
As defined, $\cx(U_\zeta)$ and $\Cx(U_\zeta)$ might depend on the choice of $l$ (i.~e., $\zeta$). Certainly,
$\cx(U_\zeta)\leq \Cx(U_\zeta)$.\footnote{An appropriate definition of the complexity $\cx(M)$ of a finite dimensional $U_\zeta$-module $M$
might be $\cx(M):=\gamma(\{\max_{\lambda\in X^+}\dim\Ext^n_{U_\zeta}(M,L(\lambda))\})$. We do not develop this notion
in any detail in this paper, though we consider similar issues for rational $G$-modules in \S6.  In both the $G$-module
and quantum case, we are more
concerned in this paper with
using the context of module complexity
to obtain bounds on growth rates,
rather than analyzing complexities
of individual modules.}

In the following theorem, we will call an integer $l>1$ {\it exceptional} for the root system $\Phi$ provided one
of the following (redundant) conditions holds:

\begin{itemize}
\item[(i)] $l$ is even, or divisible by $3$ in case $\Phi$ has type $G_2$.

\item[(ii)] $l$ is a bad prime for $\Phi$. In other words, $l=2$ in types $B,C,D$; $2,3$ in types $E_6,E_7,F_4, G_2$; and
$l=2,3,5$ in type $E_8$.

\item[(iii)] $l$ divides $n+1$ in type $A_n$.

\item[(iv)] $l=9$ in type $E_6$ and $l=7$ or $9$ in type $E_8$.

\end{itemize}

Note that no odd $l>h$ is exceptional. Now, given $l>1$, define $\Phi_{0,l}:=\{\alpha\in\Phi\,|\,(\rho,\alpha^\vee)\equiv 0\,\mod l\}$. When $l$ is not exceptional,
we have that ${\opH}^\bullet(u_\zeta,{\mathbb C})={\opH}^{2\bullet}(u_\zeta,{\mathbb C})$ is a (reduced) finitely generated $k$-algebra
(in fact, an integral domain) of Krull dimension equal to $|\Phi\backslash\Phi_{0,l}|$. This is proved in \cite{GK} when $l>h$ and
in \cite[Thm. 1.2.3]{BNPP} when $l\leq h$. In fact, ${\opH}^\bullet(u_\zeta,{\mathbb C})$ is the coordinate algebra of the
nullcone of $\mathfrak g$, when $l>h$. In this case, $\Phi_{0,l}=\emptyset$ (which holds even if $l=h$). When $l\leq h$, ${\opH}^\bullet(u_\zeta,{\mathbb C})$ is the coordinate algebra of
an explicitly determined affine variety of the form $G\cdot {\mathfrak u}_J$, where ${\mathfrak u}_J$ is the nilpotent
radical of the parabolic subalgebra of $\mathfrak g$ (containing the negative Borel subalgebra) for a suitable subset
$J\subseteq\Pi$. In addition, if $M$ is a finite dimensional $u_\zeta$-module, $\Ext^\bullet_{u_\zeta}(M,M)$ is a finitely
generated ${\opH}^\bullet(u_\zeta,{\mathbb C})$-module. (See also \cite{MPSW}.)

In the following result, the reader should keep in mind that $\Phi_{0,l}=\emptyset$ if $l\geq h$.
 \begin{thm}\label{maintheorem}There exists a constant $C(\Phi,l)$, depending only on $\Phi$, such that, for all $l>1$ which are
 not exceptional, and all $\lambda\in X^+$, we have
 \begin{equation}\label{display}\sum_{\nu\in X^+}\dim\Ext^n_{U_\zeta}(L_\zeta(\lambda),L_\zeta(\nu))\leq C(\Phi,l)n^{|\Phi\backslash\Phi_{0,l}|-1}.\end{equation}
Furthermore, if $l>h$, the constant $C(\Phi,l)$ can be chosen independently of $l$.\footnote{Thus, there is a constant
independent of $l$ which serves for $C(\Phi,l)$ for all non-exceptional $l$ in (\ref{display}). The exponent $|\Phi\backslash\Phi_{0,l}|$ can change, however, when $l<h$, though it is always at most $|\Phi|$.}
\end{thm}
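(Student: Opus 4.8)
The plan is to reduce the estimate \eqref{display} to a statement about the graded dimensions of $\opH^\bullet(u_\zeta,-)$ composed with the cohomology of $U$, via the spectral sequence \eqref{spectralsequence}, and then to exploit the fact that $\opH^\bullet(u_\zeta,\mathbb C)$ is a finitely generated commutative ring of Krull dimension $|\Phi\backslash\Phi_{0,l}|$, so that its Hilbert series grows polynomially of that degree, while $\Ext^\bullet_{u_\zeta}(M,M)$ is finitely generated over it. First I would pass from $\Ext^n_{U_\zeta}(L_\zeta(\lambda),L_\zeta(\nu))$ to $\opH^n(U_\zeta,L_\zeta(\lambda)^\star\otimes L_\zeta(\nu))$ and use \eqref{spectralsequence}: since the category of locally finite $U$-modules is semisimple, the $E_2$-page collapses and one gets $\opH^n(U_\zeta,M)\cong \Hom_U(\mathbb C,\opH^n(u_\zeta,M)^{(-1)})$, i.e. the degree-$n$ cohomology is the $U$-fixed points of $\opH^n(u_\zeta,M)$. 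So the left side of \eqref{display}, summed over $\nu$, becomes $\sum_\nu \dim\Hom_U(\mathbb C,\opH^n(u_\zeta,L_\zeta(\lambda)^\star\otimes L_\zeta(\nu))^{(-1)})$.

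Next I would fix $\lambda$ and analyze $\bigoplus_n\bigoplus_\nu \opH^n(u_\zeta,L_\zeta(\lambda)^\star\otimes L_\zeta(\nu))$ as a module over $R:=\opH^\bullet(u_\zeta,\mathbb C)$. The key structural input is: $\Ext^\bullet_{u_\zeta}(L_\zeta(\lambda),L_\zeta(\lambda))$ is a finitely generated $R$-module (quoted in the excerpt), and one should be able to bound the number of generators and their degrees uniformly in $\lambda$ and $l$ — this is where Lemma \ref{usefulfact} enters, controlling the number of composition factors of tensor products of an irreducible with a module of bounded dimension, combined with the root-system-only bounds from \cite{PS5}. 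Summing $\opH^n(u_\zeta,L_\zeta(\lambda)^\star\otimes L_\zeta(\nu))$ over all dominant $\nu$ and taking $U$-fixed points should be comparable, up to a root-system constant, to the graded dimension of a finitely generated $R$-module with a uniformly bounded number of generators in uniformly bounded degrees. Since $R$ is commutative of Krull dimension $d:=|\Phi\backslash\Phi_{0,l}|$ (and generated in degree $2$ with a bounded number of generators, by the explicit description of $R$ as the coordinate ring of $G\cdot\mathfrak u_J$ or the nullcone), the Hilbert function of any such module is $O(n^{d-1})$ with the implied constant depending only on the number and degrees of the generators — hence only on $\Phi$. That yields \eqref{display} with $C(\Phi,l)$ depending only on $\Phi$; and when $l>h$ one has $\Phi_{0,l}=\emptyset$, $d=|\Phi|$, and the ring $R$ is literally the coordinate ring of the nullcone independent of $l$, so the same constant works for all such $l$, giving the last assertion.

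The main obstacle I expect is the uniformity of the constant across $\lambda$ and $l$: one must show that, as $\nu$ ranges over all dominant weights, the combined module $\bigoplus_\nu\opH^\bullet(u_\zeta,L_\zeta(\lambda)^\star\otimes L_\zeta(\nu))^{U}$ is finitely generated over $R$ with a \emph{bounded} number of generators sitting in \emph{bounded} degrees, with bounds depending only on $\Phi$. The natural route is to filter $L_\zeta(\lambda)^\star\otimes L_\zeta(\nu)$ restricted to $u_\zeta$ using the tensor identity and the Frobenius kernel structure $U_\zeta//u_\zeta\cong U$: writing $L_\zeta(\nu)=L_\zeta(\nu_0)\otimes L_\zeta(\nu_1)^{(1)}$ with $\nu_0\in X_1$ (the restricted weights), one has $L_\zeta(\nu)|_{u_\zeta}\cong L_\zeta(\nu_0)^{\oplus\dim L_\zeta(\nu_1)}$ up to the $U$-twist, so $\opH^\bullet(u_\zeta,L_\zeta(\lambda)^\star\otimes L_\zeta(\nu))$ is, as a graded vector space with compatible $U$-action, controlled by the finitely many (bounded by $|X_1|$, hence by $\Phi$ once one divides out the $l$-dependence via $\epsilon_l$) restricted cases tensored with $U$-modules, and taking $U$-fixed points kills all but boundedly many contributions. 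Making this bookkeeping precise — in particular checking that the number of distinct restricted summands and the degrees where generators appear are genuinely $\Phi$-bounded rather than $l$-bounded — is the delicate step; everything after that is the standard Hilbert-series estimate for finitely generated modules over a finitely generated commutative graded ring of known Krull dimension.
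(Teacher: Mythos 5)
Your argument for the first assertion (fixed non-exceptional $l$) is essentially the paper's: decompose $L_\zeta(\nu)\cong L_\zeta(\nu_0)\otimes L_{\mathbb C}(\nu_1)^{(1)}$, collapse the Hochschild--Serre spectral sequence (\ref{spectralsequence}) using semisimplicity of locally finite $U$-modules, use Lemma \ref{usefulfact} to dispose of the infinite sum over the twisted parts $\nu_1$ (for each fixed $\nu_0$ it contributes at most $\dim\Ext^n_{u_\zeta}(L_\zeta(\lambda_0),L_\zeta(\nu_0))$), and then bound $\sum_{\nu_0}\dim\Ext^n_{u_\zeta}(L_\zeta(\lambda_0),L_\zeta(\nu_0))$ by the growth of a finitely generated graded module over $\opH^\bullet(u_\zeta,{\mathbb C})$, whose Krull dimension is $|\Phi\backslash\Phi_{0,l}|$. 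Since $\lambda_0$ runs over finitely many restricted weights once $l$ is fixed, this gives (\ref{display}) with a constant $C(\Phi,l)$, exactly as in the paper.

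The genuine gap is in the second assertion, the $l$-independence of the constant for $l>h$. Your route is to claim that, because $\opH^\bullet(u_\zeta,{\mathbb C})$ is the coordinate ring of the nullcone independently of $l$, the module $\bigoplus_{\nu_0}\Ext^\bullet_{u_\zeta}(L_\zeta(\lambda_0),L_\zeta(\nu_0))$ has a number of generators, in degrees, bounded only in terms of $\Phi$; you yourself flag this ``bookkeeping'' as the delicate step, and no argument is given. It is not a routine step: the little quantum group $u_\zeta$, its restricted irreducibles, and the index set of restricted weights all change with $l$ (the number of restricted weights grows like $l^{\operatorname{rank}\Phi}$, so it is not ``bounded by $\Phi$''), and the isomorphism $\epsilon_l$ of affine Weyl groups says nothing directly about generators of $u_\zeta$-cohomology modules; it only yields $l$-independence after one has converted $\Ext$-dimensions for $U_\zeta$ at $l$-regular weights into Kazhdan--Lusztig polynomial coefficients. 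That conversion is precisely how the paper proves uniformity: for $l$-regular $\lambda,\nu$ the dimensions $\dim\Ext^n_{U_\zeta}(L_\zeta(\lambda),L_\zeta(\nu))$ are sums of products of KL-polynomial coefficients via (\ref{KL1})--(\ref{KL2}) (using \cite{CPS1} and the validity of the Lusztig character formula), hence are literally the same for all $l>h$; the remaining case of $l$-singular $\nu$ is handled by translation functors, writing $\Ext^n_{U_\zeta}(L_\zeta(\lambda),L_\zeta(\nu))\cong\Ext^n_{U_\zeta}(T'L_\zeta(\lambda),L_\zeta(\nu'))$ with $\nu'$ regular and bounding $T'L_\zeta(\lambda)$ by a projective cover whose length is bounded by a constant $e(\Phi)$ independent of $l$ (\cite[Lem.~6.3]{PS5}). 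Without either this representation-theoretic input or an actual proof of your uniform-generation claim over the nullcone ring, the second (and harder) half of Theorem \ref{maintheorem} is not established by your proposal.
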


\begin{proof} First, assume that $l $ is fixed and that $l$ is not exceptional. For any $\tau\in X^+$, write $\tau=\tau_0+l\tau_1$, where
$\tau_0,\tau_1\in X^+$ and $\tau_0$ is $l$-restricted (i.~e., $(\tau_0,\alpha^\vee)<p$, $\forall \alpha\in\Pi$). Thus, $L_\zeta(\lambda)\cong L_\zeta(\lambda_0)
\otimes L_{\mathbb C}(\lambda_1)^{(1)}$.
Then
$$\begin{aligned} \sum_\nu\dim\Ext^n_{U_\zeta}(L_\zeta(\lambda),L_\zeta(\nu)) &=\sum_\nu\dim\Hom_{U}(L_{\mathbb C}(\lambda_1), L_{\mathbb C}(\nu_1)\otimes\Ext^n_{u_\zeta}(L_\zeta(\lambda_0),L_\zeta(\nu_0))^{(-1)})\\
&=\sum_{\nu}\dim\Hom_{U}(L_{\mathbb C}(\nu_1^\star), L_{\mathbb C}(\lambda_1^\star)\otimes\Ext^n_{u_\zeta}(L_\zeta(\lambda_0),L_\zeta(\nu_0))^{(-1)})\\
&\leq \sum_{\nu_0}\dim\Ext^n_{u_\zeta}(L_\zeta(\lambda_0),L_\zeta(\nu_0))\quad{\text{\rm (by Lemma \ref{usefulfact})}}\\
&\leq C(\Phi,l)n^{|\Phi\backslash\Phi_{0,l}|-1}.\end{aligned}$$
The second equality holds since, for any $\tau\in X^+$,  $L_{\mathbb C}(\tau^\star)$ is isomorphic
to the dual $L_{\mathbb C}(\tau)^*$. In the last inequality, we use the fact that $\Ext^\bullet_{u_\zeta}(L_\zeta(\lambda),\bigoplus_{\nu_0}L_\zeta(\nu_0))$ is
a graded submodule of the finitely generated graded module $\Ext_{u_\zeta}^\bullet(\bigoplus_{\nu_0}L_\zeta(\nu_0),\bigoplus_{\nu_0}L_\zeta(\nu_0))$ for ${\opH}^\bullet(u_\zeta,{\mathbb C})$.
It therefore has rate of growth $\gamma(\{ \dim\Ext^n_{u_\zeta}(\bigoplus_{\nu_0}L_\zeta(\nu_0),\bigoplus_{\nu_0}L_\zeta(\nu_0))\})$ at most the Krull dimension of ${\opH}^\bullet(u_\zeta,{\mathbb C})$, namely, $|\Phi\backslash\Phi_{0,l}|$; cf. \cite[\S5.4]{Ben}.

This completes the proof of the first assertion of the theorem. For the second assertion, assume that $l>h$. We must show
that the constants $C(\Phi,l)$ can be chosen independently of $l$. For  $l$-regular dominant weights $\lambda,\nu$, $\dim\Ext^n_{U_\zeta}(L_\zeta(\lambda),L_\zeta(\nu))$ is given by a sum of products of Kazhdan-Lusztig
polynomials, and so it is independent of $l$. Therefore, there is a constant $C(\Phi)$ such that, for all $l$,
$\sum_\nu\dim\Ext^n_{U_\zeta}(L_\zeta(\lambda),L_\zeta(\nu))\leq C(\Phi)n^{|\Phi|-1}$ for all sufficiently large $n$.

Now suppose that $\nu$ is $l$-singular (for $l$ arbitrary). Then $\nu$ belongs to the upper closure of a dominant
alcove containing an $l$-regular dominant weight $\nu'$. Let $T=T_{\nu'}^\nu$ be the translation operator from
the category of $U_\zeta$-modules in the block of $L_\zeta(\nu')$ to those in the block of $L_\zeta(\nu)$. Hence, $TL_\zeta(\nu')=L_\zeta(\nu)$.
If $T'=T^{\nu'}_\nu$ is its adjoint translation operator, we have
$$\Ext^n_{U_\zeta}(L_\zeta(\lambda),L_\zeta(\nu))\cong\Ext^n_{U_\zeta}(L_\zeta(\lambda),TL_\zeta(\nu'))\cong\Ext^n_{U_\zeta}(T'L_\zeta(\lambda),L_\zeta(\nu')).$$
On the other hand, $T'L_\zeta(\lambda)$ has an irreducible head, so it is a homomorphic image of the $U_\zeta$-projective
cover $Q_\zeta(\xi)$ of $L_\zeta(\xi)$, where $\xi$ is the $l$-regular weight in the unique alcove containing $\lambda$ in its upper
closure which is linked to $\nu'$. By \cite[Lemma 6.3]{PS5}, there is a uniform bound $e(\Phi)$ on the length of $Q_\zeta(\xi)$.
Hence,
$$\sum_\nu\dim\Ext^n_{U_\zeta}(L_\zeta(\lambda),L_\zeta(\nu))\leq e(\Phi)C(\Phi)n^{|\Phi|-1},$$
as required.
The above argument completes the proof in case $l>h$. \end{proof}

\begin{cor}\label{complexityuppper} For $l$ not exceptional, $\Cx(U_\zeta)\leq |\Phi\backslash\Phi_{0,l}|$. In particular, if $l>h$,  there exists a constant $C_0(\Phi)$ such that for all regular dominant weights $\nu$
$$\dim {\opH}^n(U_\zeta,L_\zeta(\nu))\leq C_0(\Phi)n^{|\Phi|-1}.$$\end{cor}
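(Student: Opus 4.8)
The plan is to deduce both assertions directly from Theorem \ref{maintheorem}; the corollary carries essentially no content beyond unwinding definitions, plus one small bookkeeping point about small values of $n$.

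For the first assertion, recall from (\ref{complex}) that $\Cx(U_\zeta)$ is by definition the rate of growth $\gamma$ of the sequence $\{s_n\}$ with $s_n=\max_{\lambda\in X^+}\sum_{\nu\in X^+}\dim\Ext^n_{U_\zeta}(L_\zeta(\lambda),L_\zeta(\nu))$. When $l$ is not exceptional, Theorem \ref{maintheorem} gives $s_n\leq C(\Phi,l)\,n^{|\Phi\backslash\Phi_{0,l}|-1}$ for all $\lambda$. Comparing with the defining condition (\ref{rateofgrowth}), the integer $d=|\Phi\backslash\Phi_{0,l}|$ witnesses polynomial growth of $\{s_n\}$, so $\gamma(\{s_n\})\leq|\Phi\backslash\Phi_{0,l}|$; that is, $\Cx(U_\zeta)\leq|\Phi\backslash\Phi_{0,l}|$.

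For the second assertion, suppose $l>h$. As recorded in the remark just before Theorem \ref{maintheorem}, this forces $\Phi_{0,l}=\emptyset$, whence $|\Phi\backslash\Phi_{0,l}|=|\Phi|$, and the second part of Theorem \ref{maintheorem} allows $C(\Phi,l)$ to be replaced by a single constant $C(\Phi)$ independent of $l$. The key point is that cohomology is the special case $\lambda=0$ of the $\Ext$-sum in (\ref{display}): since the trivial module is $L_\zeta(0)$ with $0\in X^+$, one has, for every dominant $\nu$ (in particular every regular dominant $\nu$),
$$\dim{\opH}^n(U_\zeta,L_\zeta(\nu))=\dim\Ext^n_{U_\zeta}(L_\zeta(0),L_\zeta(\nu))\leq\sum_{\nu'\in X^+}\dim\Ext^n_{U_\zeta}(L_\zeta(0),L_\zeta(\nu'))\leq C(\Phi)\,n^{|\Phi|-1}$$
for $n\gg 0$, the last step being Theorem \ref{maintheorem} with $\lambda=0$. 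Setting $C_0(\Phi)=C(\Phi)$ yields the stated bound.

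The only step needing care is that Theorem \ref{maintheorem}, in the form used for $l>h$, produces the displayed estimate only for $n\gg 0$, whereas the corollary is phrased as an inequality of sequences. To make it hold for all $n\geq 1$ I would simply enlarge $C_0(\Phi)$ so that it also dominates the finitely many ratios $\bigl(\sum_{\nu'}\dim\Ext^n_{U_\zeta}(L_\zeta(0),L_\zeta(\nu'))\bigr)/n^{|\Phi|-1}$ for the remaining small values of $n$; each numerator is finite by (\ref{start}) (equivalently by \cite[Thm.~6.5]{PS5}), so the enlarged constant still depends only on $\Phi$. This is routine, and I do not anticipate any genuine obstacle: all the substance lies in Theorem \ref{maintheorem}.
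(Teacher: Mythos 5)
Your proposal is correct, and it is essentially the argument the paper intends: the corollary is stated as an immediate consequence of Theorem \ref{maintheorem}, obtained exactly as you do by unwinding the definition of $\Cx(U_\zeta)$ in (\ref{complex}) and, for the second assertion, specializing to $\lambda=0$ (so that $\opH^n(U_\zeta,L_\zeta(\nu))=\Ext^n_{U_\zeta}(L_\zeta(0),L_\zeta(\nu))$) together with the facts that $\Phi_{0,l}=\emptyset$ and $C(\Phi,l)$ may be chosen independent of $l$ when $l>h$. Your extra remark about enlarging the constant for small $n$ is harmless bookkeeping and does not change the substance.
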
.

Now let $G$ be a simple, simply connected algebraic group over an algebraically closed field $k$ (of arbitrary characteristic) which has
root system $\Phi$ with respect to a maximal torus $T$. Assume $B\supset G$ is the Borel subgroup corresponding to
  the negative roots, and let ${\mathfrak u}$ be the Lie algebra of the unipotent radical of $B$. Let $S^\bullet({\mathfrak u}^*)$
  be the symmetric algebra of ${\mathfrak u}^*$, and for any integer $n\geq 0$ and weight $\sigma\in X= X(T)$, let $S^n({\mathfrak u}^*)_\sigma$
  be the $\sigma$-weight space in $S^n({\mathfrak u}^*)$. Define
  \begin{equation}\label{symmetric complexity}
  {\mathfrak s}(\Phi):=\gamma(\{\max_\sigma\dim S^n({\mathfrak u}^*)_\sigma\}).\end{equation}
Of course, ${\mathfrak s}(\Phi)$ is independent of the field $k$.

 Now we can prove the following result, giving a lower bound on complexity.
 \begin{thm}\label{lowerbound} Assume $l>h$.

 (a) We have
 $$\cx(U_\zeta)\geq {\mathfrak s}(\Phi).$$

 (b) Moreover,
 \begin{itemize}
 \item[(i)] ${\mathfrak s}(\Phi)= 1$ for types $A_1,A_2$;
 \item[(ii)] ${\mathfrak s}(\Phi)\geq 2$ for type $B_2$;
 \item[(iii)] ${\mathfrak s}(\Phi)\geq 4$ for type $G_2$;
 \item[(iv)] ${\mathfrak s}(\Phi)\geq r-1$ if the rank $r$ of $\Phi$ is at least 3.
 \end{itemize}

 \end{thm}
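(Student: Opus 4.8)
Part (a) should follow from the defining formula for $\cx(U_\zeta)$ together with the Hochschild--Serre machinery already recalled in the excerpt. The key link is that, for $l>h$, the cohomology ring ${\opH}^\bullet(u_\zeta,{\mathbb C})$ is the coordinate algebra of the nullcone of ${\mathfrak g}$, and the nullcone is $G\cdot{\mathfrak u}$; restricting functions from $G\cdot{\mathfrak u}$ to ${\mathfrak u}$ and grading by $T$-weights shows that $\dim{\opH}^{2n}(u_\zeta,{\mathbb C})$ already sees $\dim S^n({\mathfrak u}^*)_\sigma$ in each fixed weight $\sigma$ --- more precisely, by a standard argument the rate of growth of $\{\max_\sigma\dim S^n({\mathfrak u}^*)_\sigma\}$ is a lower bound for the growth rate of suitable $\Ext$-sequences. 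Concretely, I would take $\lambda=\nu=0$ first and try to exhibit $\Ext^n_{U_\zeta}({\mathbb C},{\mathbb C})$, or more flexibly $\Ext^n_{U_\zeta}(L_\zeta(\mu),L_\zeta(\mu'))$ with $\mu,\mu'$ chosen so that via (\ref{spectralsequence}) the relevant $\Hom_U$-term picks out a single weight space $S^n({\mathfrak u}^*)_\sigma^{(-1)}$; a Steinberg-module twist, $\mu_0=\mu_0'=(l-1)\rho$, is the natural device to make $L_\zeta(\lambda_0)\otimes L_\zeta(\nu_0)^*$ contain the trivial module with multiplicity one so that the algebraic-group cohomology ${\opH}^\bullet(G_1,-)$ analogue contributes $S^\bullet({\mathfrak u}^*)$ in the desired weight. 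The degeneration of (\ref{spectralsequence}) (its $E_2$-page is already a $\Hom_U$ onto a semisimple category) means no spectral-sequence subtleties arise: one just needs a weight $\sigma$ realizing the maximum in (\ref{symmetric complexity}) for infinitely many $n$ and then a choice of $\mu_1$ making $\Hom_U(L_{\mathbb C}(\mu_1),L_{\mathbb C}(\mu_1')\otimes S^n({\mathfrak u}^*)^{(-1)})$ detect that weight. I expect this to give $\cx(U_\zeta)\geq{\mathfrak s}(\Phi)$ cleanly.

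For part (b) the four claims are essentially explicit computations about the $T$-graded growth of $S^\bullet({\mathfrak u}^*)$, i.e. about the function $n\mapsto\max_\sigma\dim S^n({\mathfrak u}^*)_\sigma$, where ${\mathfrak u}$ has a basis of root vectors indexed by $\Phi^+$. For (i), in types $A_1$ and $A_2$ every positive root appears, but in $A_1$ there is one root (so $S^n({\mathfrak u}^*)$ is $1$-dimensional) and in $A_2$ one checks that for each fixed weight $\sigma$ the monomials $e_\alpha^{a}e_\beta^{b}e_{\alpha+\beta}^{c}$ of that weight and degree $n$ form a set of size $O(1)$ --- indeed the three exponent conditions $a+c$, $b+c$, $a+b+c$ being fixed pins down $a,b,c$ up to $O(1)$ --- so the growth rate is exactly $1$. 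For (ii)--(iv) the strategy is the reverse: produce a weight $\sigma$ (depending on $n$) that is hit by polynomially many monomials. In type $B_2$ one uses the two short roots and two long roots and finds a one-parameter family; in $G_2$ the six positive roots with their linear relations leave a $4$-parameter family of exponent vectors with fixed weight and degree, giving ${\mathfrak s}\geq 4$. For (iv) the clean argument is to pick a sub-root-system: inside any $\Phi$ of rank $r\geq 3$ one can find roots $\gamma_1,\dots,\gamma_{r-1},\delta$ with $\delta=\gamma_1+\cdots+\gamma_{r-1}$ (e.g. a chain $\alpha_1,\alpha_1+\alpha_2,\dots$ is not quite right, but a set of $r-1$ roots whose sum is again a root is readily exhibited in each type of rank $\geq 3$), and then the monomials $\prod e_{\gamma_i}^{a_i}\cdot e_\delta^{c}$ with $a_i+c$ fixed for all $i$ and $\sum a_i + c$ a fixed degree: subtracting, all $a_i$ are equal, leaving essentially one free parameter per... no --- rather, fixing the weight forces $a_i + c$ equal to a common value $m_i$, and then degree $n = \sum m_i - (r-2)c$ leaves $c$ free over a range of length $\sim n/(r-2)$, while the remaining $e_\alpha$'s for $\alpha\notin\{\gamma_i,\delta\}$ can absorb slack in a bounded number of ways --- hmm, this gives only linear growth, so the correct count must use $r-1$ independent free directions among the root-vector exponents after imposing "$\sigma$ fixed", which is a rank computation on the weight map $\N^{\Phi^+}\to X$.

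**Main obstacle.** The genuinely delicate point is (b)(iv): getting the sharp lower bound $r-1$ (and not merely $2$ or $3$) requires a uniform, type-independent way to see that the weight map $\N^{\Phi^+}\to X$, restricted to degree-$n$ monomials, has generic fibres of polynomial growth of degree exactly $r-1$ in $n$ --- equivalently that a generic weight of $S^n({\mathfrak u}^*)$ lies in the relative interior of the degree-$n$ slice of the cone generated by $\Phi^+$, whose dimension is $|\Phi^+|$, intersected with the affine subspace "weight $=\sigma$, degree $=n$", an intersection of codimension $r$ (the rank of the weight map is $r$, since $\Phi^+$ spans $X\otimes{\mathbb Q}$) inside a polytope of dimension $|\Phi^+|$, so the fibre polytope has dimension $|\Phi^+|-r$ --- wait, that would give growth $|\Phi^+|-r$, not $r-1$. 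So in fact I would reconsider: the bound claimed is $r-1$, which is typically much smaller than $|\Phi^+|-r$, so (b)(iv) is presumably proved not by generic fibres but by a clever special choice of $\sigma$ along a subsystem of type $A_{r-1}$ or a chain, and part (a) only transfers this to $\cx(U_\zeta)$. Reconciling the exponent bookkeeping so that the free parameters number exactly $r-1$ --- and checking the claimed small values $1,2,4$ in the rank-$2$ cases by hand --- is where the real work lies; the algebraic-group/quantum-group reduction in part (a) is, by contrast, routine given Lemma \ref{usefulfact}, the spectral sequence (\ref{spectralsequence}), and the identification of ${\opH}^\bullet(u_\zeta,{\mathbb C})$ with functions on $G\cdot{\mathfrak u}$.
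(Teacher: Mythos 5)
Your part (a) leans on exactly the step where the real content lies, and the concrete device you propose fails. With $\lambda=\nu=0$ one has $\opH^{>0}(U_\zeta,{\mathbb C})=0$ (as recalled in the introduction), and with both restricted parts equal to the Steinberg weight $(l-1)\rho$ the module $L_\zeta((l-1)\rho)$ is projective over $u_\zeta$, so all higher $\Ext_{U_\zeta}$ between such irreducibles vanish; neither choice can ``pick out'' a weight space of $S^n({\mathfrak u}^*)$. The underlying difficulty is that $\opH^\bullet(u_\zeta,L_0^*\otimes L_0')$ for general irreducible restricted parts is not known, so the spectral sequence (\ref{spectralsequence}) does not by itself convert weight multiplicities of $S^\bullet({\mathfrak u}^*)$ into lower bounds for $\Ext$ between irreducibles; your appeal to ``a standard argument'' asserts precisely what has to be proved. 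The paper's route is different: for $p\gg 0$ it computes $\opH^n(G_1,\nabla(\mu))$ via \cite{KLT} as $\opH^0(G/B,S^{n/2}({\mathfrak u}^*)\otimes\xi)$, reads off $d(\tau)=\dim S^m({\mathfrak u}^*)_\sigma$ as $\dim\Ext^n_G(\Delta(\tau)^{(1)},\nabla(\mu))$, transfers this to $U_\zeta$ by \cite{CPS7}, and then---the step entirely absent from your sketch---uses the Kazhdan--Lusztig theory property to obtain the surjection (\ref{surjective condition}) $\Ext^n_{U_\zeta}(L_\zeta(p\tau),L_\zeta(\mu))\twoheadrightarrow\Ext^n_{U_\zeta}(L_\zeta(p\tau),\nabla_\zeta(\mu))$, which is what turns a costandard-coefficient computation into a bound on irreducible-to-irreducible $\Ext$; independence of $l>h$ then comes from the Kazhdan--Lusztig polynomial interpretation. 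A variant closer to your sketch (take both restricted parts trivial, so the relevant $u_\zeta$-cohomology is $k[{\mathcal N}]$ by \cite{GK}; use that restriction $k[{\mathcal N}]\to S^\bullet({\mathfrak u}^*)$ is a graded $T$-equivariant surjection, and that for $\nu_1$ sufficiently dominant the multiplicity of $L_{\mathbb C}(\nu_1+\sigma)$ in $L_{\mathbb C}(\nu_1)\otimes k[{\mathcal N}]_m$ equals the $\sigma$-weight multiplicity) could likely be made to work, but none of these ingredients appears in your proposal.

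For part (b) the essential case (iv) is missing, as you in effect concede: your chain construction gives only linear growth by your own account, and the generic-fibre heuristic you then gesture at is not carried out (if made rigorous it would aim at the stronger bound $|\Phi^+|-r$, which the authors only conjecture in a remark). The paper proves (iv) by the explicit combinatorial Lemma \ref{combinatorialresult}: for each of at least $r-2$ triples $(\alpha,\beta,\gamma)$ of simple roots with $\alpha+\beta,\beta+\gamma\in\Phi$, the weight $\alpha+2\beta+\gamma$ admits two decompositions into three positive roots; mixing these independently ($n_\tau$ copies of one, $n-n_\tau$ of the other, for each triple) produces $(n+1)^{r-2}$ monomials of the single weight $n\tau^\Phi$ in degree $3n|{\mathcal T}|$, with distinctness checked by an inductive count of occurrences of $\alpha_m+\alpha_{m+1}$. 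Nothing equivalent is in your proposal. Minor points: in (iii) the correct count is three free exponent parameters (six positive roots, three independent linear constraints from weight and degree), giving $\sim n^3$ monomials and hence rate $4$ under the convention $s_n\leq Cn^{d-1}$; a genuine ``$4$-parameter family'' would give rate $5$. Your (i) and (ii) are essentially the paper's computations.
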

  \begin{proof}Assume the group $G$ above is taken over an algebraically closed field $k$ of sufficiently large
   characteristic $p$ so that the Lusztig character formula holds for all irreducible rational $G$-modules $L(\lambda)$
   having highest weight in the Jantzen region consisting of all $\lambda\in X^+$ satisfying the condition that $(\lambda+\rho,\alpha_0^\vee)\leq p(p-h+2)$.
See \cite{Fiebig} for specific bounds on $p$ (which depend on the root system $\Phi$). Assume also that $G$ is defined and split over ${\mathbb F}_p$, and that $T$
is a maximal split torus, contained in the Borel subalgebra $B$ corresponding to the set $-\Phi^+$ of negative roots.
  For any $w\in W$, let $\xi\in X^+$ and let
 $\mu:=w\cdot 0+p\xi\in X^+$. Let $G_1$ be the first infinitesimal subgroup of $G$ (i.~e., the kernel of the Frobenius
 morphism $G\to G$). Then, by \cite{KLT},
$${\opH}^0(G/B,S^{\frac{n-\ell(w)}{2}}({\mathfrak u}^*)\otimes\xi)\cong {\opH}^n(G_1,\nabla(\mu))^{(-1)}.
$$
In this expression, the right-hand side is interpreted to be 0 if $n-\ell(w)\not\in 2{\mathbb Z}$.
To make an explicit choice of $\mu$, take $w=1$ and assume that $n=2m$ is an even integer. By taking $\xi\in X^+$ sufficiently
large, we can assume that all the weights in  $S^m({\mathfrak u}^*)\otimes\xi$ are dominant. Let $\Xi\subset X^+$
be the set of all these dominant weights. Then, using the Kempf vanishing theorem, ${\opH}^0(G/B,S^m({\mathfrak u}^*)\otimes \xi)$ has a filtration with sections
of the form $\nabla(\tau)={\opH}^0(G/B,\tau)$, $\tau\in\Xi$. Furthermore, if $d(\tau)$ is the multiplicity of $\tau$ as a weight in $S^m({\mathfrak u}^*)\otimes\xi$, then $\nabla(\tau)$ appears as a section exactly $d(\tau)$ times in the corresponding good filtration of
${\opH}^n(G_1,\nabla(\mu))$. On the other hand, by the Hochschild-Serre spectral sequence
$$\dim\Ext^n_G(\Delta(\tau)^{(1)},\nabla(\mu))=\dim\Hom_G(\Delta(\lambda), {\opH}^n(G_1,\nabla(\mu))^{(-1)})=d(\tau).$$
Here we use the fact that $$\Ext^i_G(\Delta(\tau), {\opH}^{n-i}(G_1,\nabla(\mu))=0$$ for $i>0$ because $\dim\Ext^i_G(\Delta(\tau),
\nabla(\sigma))=\delta_{i,0}\delta_{\tau,\sigma}$.

Now let $\zeta=\sqrt[p]{1}$ and consider the quantum group $U_\zeta$. By \cite[Cor. 5.2]{CPS7},
$$
\dim\Ext^n_{G}(\Delta(\tau)^{(1)},\nabla(\mu))=\dim\Ext^n_{U_\zeta}(L_\zeta(p\tau),\nabla_\zeta(\mu)).$$
In addition, the inclusion $L_\zeta(\mu)\hookrightarrow \nabla_\zeta(\mu)$ induces a surjective map
\begin{equation}\label{surjective condition} \Ext^n_{U_\zeta}(L_\zeta(p\tau),L_\zeta(\mu))\twoheadrightarrow\Ext^n_{U_\zeta}(L_\zeta(p\tau),\nabla_\zeta(\mu)),\end{equation}
using \cite[Thm. 4.3]{CPS1} and the fact that the full subcategory of $U_\zeta$-mod with composition factors $p$-regular weights
has a Kazhdan-Lusztig theory. Putting things together, we get that
$$\dim \Ext^n_{U_\zeta}(L_\zeta(p\tau),L_\zeta(\mu))\geq
 d(\tau).$$
We can take $\sigma$ so that $S^m({\mathfrak u}^*)_\sigma$ is the weight space  
 in $S^m({\mathfrak u}^*)$ of maximal dimension, and set $\tau:=\sigma+\xi$. Then
$$
\dim \Ext^n_{U_\zeta}(L_\zeta(p\tau),L_\zeta(\mu))\geq {\mathfrak s}(\Phi).$$
Since the weights involved on the left-hand side are all $p$-regular, these dimensions are given in terms of the coefficients of Kazhdan-Lusztig
polynomials (see (\ref{KL1}) and (\ref{KL2})), and therefore are independent of $l$, using an appropriate $W_a^+$-parametrization. Thus, for any such $l$, $\cx(U_\zeta)\geq {\mathfrak s}(\Phi)$ as required for (a).

Now we prove (b).  Part (i) is obvious since each $\sigma$-weight space in $S^n({\mathfrak u}^*)$ is at most 1-dimensional.
We prove (ii). Write $\Pi=\{\alpha,\beta\}$ with $\beta$ short. For non-negative integers $A,B$, $\sigma=A\alpha+B\beta$ is
a weight of $S^n({\mathfrak u}^*)$ if and only if there exist non-negative integers $a,b,c$ such that
$$ \begin{aligned} A= a+c+d;\\
                   B=b+c+2d;\\
                   n=a+b+c+d.
                   \end{aligned}
                   $$
In particular, if $n=A=B$, then $b=0,a=d$ and $c=n-2d$. Thus, $d$ can assume any integer value between $0$ and $[n/2]$,
 and (ii) follows. Part (iii) is similar, but more complicated, using
 similar equations for $A$, $B$ in  $\sigma=A\alpha+B\beta$ ($\beta$ short), taking $A=B=n$ as in type $B_2$. We details
 to the reader.

 Finally, the proof of (iv) uses Lemma \ref{combinatorialresult} below. It says, for $\Phi$ having
 rank $\geq 3$, that given any positive integer $M$, there exists a integer $m>M$
such that, for some weight $\sigma$, $d(\sigma)>d(\Phi)m^{r-2}$. Then taking $n=2m$ and choosing $\mu$ as above, we
get that
$$\dim\Ext^n_{U_\zeta}(L_\zeta(p\sigma),L_\zeta(\mu))\geq d(\Phi)m^{r-2}.$$
Since the weights involved are all $p$-regular, these dimensions are given in terms of the coefficients of Kazhdan-Lusztig
polynomials, and therefore are ``the same" using an appropriate $W_a^+$ parametrization. Thus, for any such $l$, $\cx(U_\zeta)\geq r-1$, as required.
  \end{proof}

\begin{lem}\label{combinatorialresult} Assume the rank of $\Phi$ is at least 3. There exists a constant $d(\Phi)>0$ with the following property. For any positive
number $M$, there exists an integer $m>M$ and a weight $\tau$ such that, letting $r$ be the rank of $\Phi$,
$$\dim S^m(u^*)_\tau\geq d(\Phi)m^{r-2}.$$
\end{lem}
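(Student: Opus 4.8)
The plan is to prove the lemma by a direct pigeonhole argument on the monomial basis of $S^\bullet({\mathfrak u}^*)$. Since ${\mathfrak u}$ is the nilradical of the Borel subgroup attached to $-\Phi^+$, the weights of ${\mathfrak u}^*$ are exactly the positive roots, each with multiplicity one; thus $S^m({\mathfrak u}^*)$ has a basis consisting of the monomials $\prod_{\alpha\in\Phi^+}x_\alpha^{c_\alpha}$ with $c_\alpha\in{\mathbb N}$ and $\sum_\alpha c_\alpha=m$, and such a monomial has weight $\sum_\alpha c_\alpha\alpha$. Writing $N:=|\Phi^+|$, the total number of these monomials is $\binom{m+N-1}{N-1}$, while $\dim S^m({\mathfrak u}^*)_\tau$ is the number of them of weight $\tau$. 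First I would record the two elementary estimates that drive the argument: $\binom{m+N-1}{N-1}\geq m^{N-1}/(N-1)!$ for all $m\geq 1$ (each of the $N-1$ factors $m+1,\dots,m+N-1$ exceeds $m$), and an upper bound of the form $C(\Phi)m^r$ on the number of distinct weights occurring in $S^m({\mathfrak u}^*)$.

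For the second estimate, observe that any weight $\tau=\sum_{i=1}^r a_i\alpha_i$ occurring in $S^m({\mathfrak u}^*)$, expanded in the basis of simple roots, satisfies $0\leq a_i\leq c_1(\Phi)\,m$ for each $i$, where $c_1(\Phi)$ is the largest simple-root coefficient appearing among the positive roots of $\Phi$ (indeed $\tau$ is a non-negative combination of $m$ positive roots). Hence the set of weights of $S^m({\mathfrak u}^*)$ has at most $(c_1(\Phi)m+1)^r\leq C(\Phi)m^r$ elements, for a suitable constant $C(\Phi)$ depending only on $\Phi$. Combining the two estimates by pigeonhole, for each $m\geq 1$ there is a weight $\tau$ (depending on $m$) with
\begin{equation*}
\dim S^m({\mathfrak u}^*)_\tau\ \geq\ \frac{\binom{m+N-1}{N-1}}{C(\Phi)m^r}\ \geq\ \frac{m^{N-1-r}}{(N-1)!\,C(\Phi)}.
\end{equation*}

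It then remains to observe that $N-1-r\geq r-2$ whenever $r\geq 3$, i.e.\ that every irreducible root system of rank $r\geq 3$ satisfies $|\Phi^+|\geq 2r-1$. This is the only point where the rank hypothesis is used, and the only place a (trivial) case check enters: among rank-$r$ irreducible systems $|\Phi^+|$ is smallest in type $A_r$, where it equals $r(r+1)/2$, and $r(r+1)/2\geq 2r-1$ is equivalent to $(r-1)(r-2)\geq 0$; so the inequality holds, with increasing slack as $r$ grows, in every type. Granting this, $m\geq 1$ gives $m^{N-1-r}\geq m^{r-2}$, so with $d(\Phi):=1/((N-1)!\,C(\Phi))$ we obtain $\dim S^m({\mathfrak u}^*)_\tau\geq d(\Phi)m^{r-2}$ for every $m\geq 1$ — in particular for some (indeed any) integer $m>M$, which is slightly more than the statement asks. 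I do not expect any genuine obstacle here: the only items requiring care are the bookkeeping constant $C(\Phi)$ in the weight count and the elementary inequality $|\Phi^+|\geq 2r-1$ for rank at least $3$, which is precisely what forces the exponent to work out.
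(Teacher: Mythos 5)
Your proof is correct, but it is a genuinely different argument from the one in the paper. The paper proceeds by explicit construction: it fixes triples $(\alpha,\beta,\gamma)$ of simple roots with $(\alpha,\beta)\neq 0\neq(\beta,\gamma)$, notes that $\alpha+2\beta+\gamma$ admits two distinct expressions as a sum of three positive roots, and by mixing these choices over the $|\mathcal T|\geq r-2$ triples produces $(n+1)^{r-2}$ distinct monomials of the single weight $n\tau^\Phi$ in $S^{3n|\mathcal T|}({\mathfrak u}^*)$ (with a small type-by-type discussion to see the resulting multisets are distinct). You instead use a pure counting/pigeonhole argument: $\dim S^m({\mathfrak u}^*)=\binom{m+N-1}{N-1}\geq m^{N-1}/(N-1)!$ with $N=|\Phi^+|$, while the weights occurring lie among at most $C(\Phi)m^r$ lattice points, so some weight space has dimension at least a constant times $m^{N-1-r}$, and $N-1-r\geq r-2$ for irreducible $\Phi$ of rank $r\geq 3$ (minimal case type $A_r$, where it reduces to $(r-1)(r-2)\geq 0$). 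Your route is shorter, avoids the paper's case analysis, works for every $m\geq 1$ rather than just infinitely many, and in fact yields the stronger exponent $|\Phi^+|-r-1$ in place of $r-2$; via Theorem \ref{lowerbound}(a) this would give ${\mathfrak s}(\Phi)\geq|\Phi^+|-r$, which is exactly the ``better guess'' lower bound mentioned in the remark following that theorem, and it also recovers parts (i)--(iii) of Theorem \ref{lowerbound}(b) uniformly. What the paper's construction buys in exchange is an explicit weight $n\tau^\Phi$ (and explicit monomials) realizing the large multiplicity, which a pigeonhole argument cannot exhibit, though the downstream application in Theorem \ref{lowerbound} only requires existence, so your argument suffices there as well.
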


\begin{proof}Fix a linear ordering on the set $\Pi$ of simple roots. Let ${\mathcal T}$ be the set of all triples $(\alpha,\beta,\gamma)$ of distinct simple roots such that $\gamma>\alpha$ and
$(\alpha,\beta)\not=0\not=(\beta,\gamma)$. Thus, $\alpha+\beta$, $\beta+\gamma$ are roots. For each triple
$\tau=(\alpha,\beta,\gamma)\in{\mathcal T}$, put $\tau^\diamond=\alpha+2\beta+\gamma$. There are two ways to
write $\tau^\diamond$ as a sum of three positive roots:
\begin{itemize}
\item[(i)] $\tau^\diamond= (\alpha+\beta) +\beta+\tau$;
\item[(ii)] $\tau^\diamond=\alpha+\beta+ (\beta+\gamma)$.
\end{itemize}
Another way to say this is that we have two ``partitions" ${\mathfrak p}_L(\tau^\diamond)$, ${\mathfrak p}_R(\tau^\diamond)$
consisting of
$${\mathfrak p}_L(\tau^\diamond)=\{\alpha+\beta,\beta,\gamma\},\,{\mathfrak p}_R(\tau^\diamond)=\{\alpha,\beta,\beta+\gamma\}.$$
Clearly,
$$|{\mathcal T}|=\begin{cases}  r-2, \quad {\text{\rm in types $A,B,C,F,G$}};\\
r-1,\quad{\text{\rm otherwise}}.\end{cases}.$$
Put $$\tau^\Phi=\sum_{\tau\in{\mathcal T}}\tau^\diamond\in{\mathbb N}\Pi.$$

Fix a positive integer $n$. For each $\tau\in\mathcal T$ choose a non-negative integer $n_\tau\leq n$,  and form the sequence $\underline n=(n_\tau)_\tau$.
Define a partition
\begin{equation}\label{list}{\mathfrak p}_{\underline n}(\tau^\Phi):=\sum_{\tau} n_\tau{\mathfrak p}_L(\tau^\diamond)+(n-n_\tau){\mathfrak p}_R(\tau^\diamond).\end{equation}
By this we mean an unordered list of terms each of which is a root coming from the lists ${\mathfrak p}_L(\tau^\diamond)$ and ${\mathfrak p}_R(\tau^\diamond)$. (A simple root $\alpha$ can appear several times in this list.)

We claim these unordered lists of roots
are all distinct. We give the proof in the basic case when $\Phi$ has type $A,B,C$ and $F$ (essentially all the same). We
leave the other cases to the reader.  Suppose we are given an unordered list ${\mathfrak p}={\mathfrak p}_{\mathcal S}$ in
(\ref{list}), we wish to show that $\mathcal S$ is determined by it. Without loss, we can assume the ordering on
$\Pi$ is from left to right in the Dynkin diagram of $\Phi$. The triples in $\mathcal T$ can be enumerated $\tau_1=(\alpha_1,\alpha_2,
\alpha_3), \tau_2=(\alpha_2,\alpha_3,\alpha_4), \cdots, \tau_{r-2}=(\alpha_{r-2},\alpha_{r-1},\alpha_r)$. Given a list
$\mathfrak p$ in (\ref{list}), we wish to show that the sequence $\underline n$ is determined uniquely by it. First,
$n_{\tau_1}$ is just the number of occurrences of $\alpha_1+\alpha_2$ in $\mathfrak p$.  Suppose inductively that
we have determined $n_{\tau_1},\cdots, n_{\tau_m}$ for $m<r-2$. We wish to determine if $n_{\tau_{m+1}}$.
However, the number of times $\alpha_m+\alpha_{m+1}$ occurs in $\mathfrak p$ is $(n-n_{\tau_m})+n_{\tau_{m+1}}$.

Each partition $\mathfrak p$ contains $m=3n|{\mathcal T}|$ roots which all sum to $n\tau^{\Phi}$. There are
$(n+1)^{|{\mathcal T}|}$ such partitions. But $|{\mathcal T}|\geq r-2$ (with equality, except in types $D_n$, $n\geq 4$, $E_6$, $E_7$, $E_8$). Thus,
$$\begin{aligned}\dim S^m({\mathfrak u}^*)_{n\tau^{\Phi}} &\geq (n+1)^{r-2}\\
&> \left(\frac{1}{3|{\mathcal T}|}\right)^{r-2} m^{r-2},\end{aligned}$$
as required.
\end{proof}

\begin{rem} The lower estimate provided by Theorem \ref{lowerbound} is almost certainly too low in most cases, but at least
shows that $\cx(U_\zeta)\to \infty$ as $r\to \infty$. (Here $\Phi$ is irreducible.)
A better guess bound for a lower bound would be $|\Phi^+\backslash\Pi|=|\Phi^+|-r$, when $r>1$, as occurs about for
$A_2,B_2$ and $G_2$.   We work out the exact situation below for type $A_1$. The reader should observe that complete
knowledge of the dimensions of weight spaces of $S^\bullet({\mathfrak u})$ would not suffice to calculate $\cx(U_\zeta)$,
since the rate of growth of these weight spaces gives only a lower bound.
 \end{rem}

\begin{example}\label{example} Consider the case ${\mathfrak g}_{\mathbb C}={\mathfrak{sl}_2}({\mathbb C})$ and $l>2$. (The
  calculation also works for $l=2$.) Thus, $W_{a,l}$ is the infinite dihedral group on two generators $s_{\alpha}, s_{\alpha,-l}$. In this
case, $W^+_{a,l}$ consists of elements $(s_\alpha s_{\alpha,-l})^m, s_{\alpha}(s_{\alpha,-l}s_\alpha)^m$, $m\geq 0$. (For any positive integer $n$, there are two elements of $W^+_{a,l}$ of length $n$, but only one is dominant.) Hence,
for any positive integer $n$, $W^+_{\alpha,l}$ contains a unique element of length $n$. Also, if $x,y\in W_{a,l}$,
$x<y\iff \ell(x)<\ell(y)$. The Kazhdan-Lusztig polynomials are easy to describe. For $x,y\in W_{a,l}$, $P_{x,y}=1\iff x\leq y$; otherwise, $P_{x,y}=0$.

If $\lambda\in X^+$ is an $l$-singular weight, then it is easy to see that $L_\zeta(\lambda)$ is projective/injective, so there
is no nonzero cohomology in positive degree for such modules. In considering $l$-regular dominant weights, we can restrict
to the special case of dominant weights of the form $\lambda=x\cdot(-2\rho)$, $x\in W^+_{a,l}$. Denote $L_\zeta(\lambda)$, $\Delta_\zeta(\lambda)$,
 or $\nabla_\zeta(\lambda)$ simply by $L_\zeta(x)$, $\Delta_\zeta(x)$, $\nabla_\zeta(x)$, respectively.
By (\ref{KL1}), given $x\geq z$ in $W^+_{a,l}$,
\begin{equation}\label{specific}\begin{aligned}
\dim\Ext^n_{U_\zeta}(\Delta_\zeta(z),L_\zeta(x)) &=\dim\Ext^n_{U_\zeta}(L_\zeta(x),\nabla_\zeta(z))\\
&=
\begin{cases} 1\,\, \,\,n=\ell(x)-\ell(z);\\
0\,\,\,\,{\text{\rm otherwise}}.\end{cases}\end{aligned}\end{equation}

Furthermore, suppose that $a,b\in\mathbb N$ and $z\in W^+_{a,l}$, and that $$\dim\Ext^a_{U_\zeta}(L_\zeta(x),\nabla_\zeta(z))\dim\Ext^b_{U_\zeta}(\Delta_\zeta(z),L(y))\not=0.$$
Then (\ref{specific}) implies, after eliminating $\ell(z)$ and setting $n=a+b$, that
\begin{equation}\label{conditions}
\begin{cases}a=\frac{n+\ell(x)-\ell(y)}{2}\,\,{\text{\rm and}}\,\,b=\frac{n-\ell(x)+\ell(y)}{2};\\
\ell(z)=\frac{\ell(x)+\ell(y)-n}{2};\\
\ell(x)-\ell(y)\equiv n\,\mod 2.\end{cases}\end{equation}
Because $a,b$ and also $z$ are uniquely determined, if $\Ext^n_{U_\zeta}(L_\zeta(x),L_\zeta(y))\not=0$,
then it is 1-dimensional.

\begin{center}

\begin{picture}(150,150)

\put(20,0){\vector(0,1){150}}

\put(0,30){\vector(1,0){150}}

\put(20,80){\line(1,1){60}}

\put(20,80){\line(1,-1){50}}

\put(70,30){\line(1,1){60}}

\put(7,78){$n$}

\put(67,15){$n$}

\put(25,140){$Y$}

\put(140,15){$X$}

\put(60,80){$\Omega(n)$}

\end{picture}

\end{center}

 Conditions (\ref{conditions}) are also sufficient for non-vanishing as long as the integers involved are all non-negative. More precisely, let $x,y\in W^+_{a,l}$ and $n\in\mathbb N$ be so that $\ell(x)-\ell(y)\equiv 0$ mod$\,2$. Put $X=\ell(x), Y=\ell(y)$. Then $\Ext^n_{U_\zeta}(L_\zeta(x),L_\zeta(y))\not=0$ if and only if
$|X-Y|\leq n\leq X+Y $ and $X+Y\equiv n \,\,\mod 2$. Thus, non-vanishing means that $(X,Y)\in{\mathbb N}\times{\mathbb N}$ must lie
in the strip in ${\mathbb R}^2$ bounded by the lines $X- Y=n$ and $Y-X=n$, and additionally lie on or
to the right  of the line $X+Y=n$. Let $\Omega(n)$ denote the set of such integral pairs $(X,Y)$ with $X-Y\equiv n\,\mod 2$. (See
the figure above.) We have proved
\begin{prop}\label{last prop}Let $n$ be a non-negative integer. For $x,y\in W^+_{a,l}$, put $X=\ell(x),Y=\ell(y)$. Then
$$\dim\,\Ext^n_{U_\zeta}(L_\zeta(x),L_\zeta(y))=\begin{cases} 1,\quad\,\, (X,Y)\in\Omega(n);\\ 0,\quad\,\,{\text{\rm otherwise}}.\end{cases}
$$
\end{prop}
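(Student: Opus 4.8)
The plan is to carry the combinatorial analysis set up in the preceding discussion to its conclusion. The starting point is formula (\ref{KL2}): since $l>h=2$ and the Lusztig character formula holds for $U_\zeta$, it expresses $\dim\Ext^n_{U_\zeta}(L_\zeta(x),L_\zeta(y))$ as the sum over $z\in W^+_{a,l}$ and over $a+b=n$ of the products $\dim\Ext^a_{U_\zeta}(L_\zeta(x),\nabla_\zeta(z))\cdot\dim\Ext^b_{U_\zeta}(\Delta_\zeta(z),L_\zeta(y))$. So the first step is to pin down each factor. Substituting the Kazhdan--Lusztig polynomials of the infinite dihedral group $W_{a,l}$ (namely $P_{u,v}=1$ if $u\le v$ and $0$ otherwise) into (\ref{KL1}), and using that $u\le v\iff\ell(u)\le\ell(v)$ in $W_{a,l}$, yields (\ref{specific}): for $z\le x$ the space $\Ext^m_{U_\zeta}(L_\zeta(x),\nabla_\zeta(z))\cong\Ext^m_{U_\zeta}(\Delta_\zeta(z),L_\zeta(x))$ is one-dimensional when $m=\ell(x)-\ell(z)$ and zero otherwise, and it vanishes entirely when $z\not\le x$.

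Feeding (\ref{specific}) back into (\ref{KL2}), a summand indexed by $(z,a,b)$ can be nonzero only if $z\le x$, $z\le y$, $a=\ell(x)-\ell(z)$, $b=\ell(y)-\ell(z)$ and $a+b=n$. Eliminating $\ell(z)$ produces the conditions (\ref{conditions}): $\ell(z)=(\ell(x)+\ell(y)-n)/2$, with $a,b$ then forced, together with the parity requirement $\ell(x)-\ell(y)\equiv n\pmod{2}$. In particular $z$, $a$ and $b$ are uniquely determined by $x,y,n$, so at most one summand of (\ref{KL2}) survives, and when it survives it equals $1\cdot 1=1$; this already gives $\dim\Ext^n_{U_\zeta}(L_\zeta(x),L_\zeta(y))\in\{0,1\}$.

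It remains to decide exactly when the unique candidate summand is present. Writing $X=\ell(x)$ and $Y=\ell(y)$, this happens precisely when: $X-Y\equiv n\pmod{2}$; $a=(n+X-Y)/2\ge 0$ and $b=(n-X+Y)/2\ge 0$, i.e.\ $|X-Y|\le n$; and there is an element $z\in W^+_{a,l}$ of length $(X+Y-n)/2\ge 0$ lying below both $x$ and $y$ in the Bruhat order. For the last clause I would invoke the facts recorded in the example that $W^+_{a,l}$ contains exactly one element of each non-negative length and that Bruhat comparability in $W_{a,l}$ is detected by length: such a $z$ then exists precisely when $(X+Y-n)/2\ge 0$, i.e.\ $n\le X+Y$, and automatically $\ell(z)\le X$, $\ell(z)\le Y$, so $z\le x$ and $z\le y$. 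Assembling the three conditions, $\dim\Ext^n_{U_\zeta}(L_\zeta(x),L_\zeta(y))=1$ exactly when $|X-Y|\le n\le X+Y$ and $X+Y\equiv n\pmod{2}$, which is the definition of $(X,Y)\in\Omega(n)$, and is $0$ otherwise.

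The step I expect to require the most care is this last translation --- converting ``there is a $z\in W^+_{a,l}$ of the prescribed length below both $x$ and $y$'' into the inequality $n\le X+Y$ --- since it rests on elements of $W^+_{a,l}$ being determined by their length and on length governing the Bruhat order; everything else is routine bookkeeping of the numerical constraints. For the proposition as stated one need not worry about $l$-singular weights; one should, however, note that although (\ref{KL1}) as quoted requires $l>h=2$, the case $l=2$ is handled by exactly the same dihedral-group bookkeeping.
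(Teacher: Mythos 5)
Your proposal is correct and follows essentially the same route as the paper: specialize (\ref{KL1}) using the infinite dihedral Kazhdan--Lusztig polynomials to get (\ref{specific}), feed this into (\ref{KL2}) to see that at most one triple $(z,a,b)$ can contribute (forcing dimension $0$ or $1$ via (\ref{conditions})), and then check that the surviving summand exists exactly when $|X-Y|\le n\le X+Y$ with $X+Y\equiv n\pmod 2$, i.e.\ $(X,Y)\in\Omega(n)$. Your extra care with the existence of the unique $z\in W^+_{a,l}$ of length $(X+Y-n)/2$ below both $x$ and $y$ just makes explicit the step the paper summarizes as ``the conditions are also sufficient as long as the integers involved are all non-negative.''
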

Therefore, $\cx(U_\zeta)=1$, and the bound in Theorem \ref{lowerbound} is sharp.
For a fixed $n$ and $x\in W_{a,l}^+$, the maximum value of $\sum_y\dim\Ext^n_{U_\zeta}(L_\zeta(x),L_\zeta(y))$ is thus $n+1$, and it is achieved for any $x$ satisfying $X=\ell(x)\geq n$. Therefore,
$\Cx(U_\zeta)=2=|\Phi|$, and the bound in Theorem \ref{maintheorem} in sharp.
 \end{example}
\begin{rem} Once the ``answer" region $\Omega(n)$ is known in the above example, it is not difficult to give an
alternative proof of Proposition \ref{last prop} by induction on $n$, using the exact sequences
$0\to L_\zeta(x)\to \nabla_\zeta(x)\to L_\zeta(x')\to 0$, where $\ell(x')+1=\ell(x)$, together with equation
(\ref{specific}).\end{rem}

\section{Bounding Kazhdan-Lusztig polynomial coefficients} The results on complexity established above have immediate consequences for Kazhdan-Lusztig polynomial coefficients. In this short section, we write these implications down.

 For $x,y\in W^+_a$ and a non-negative integer $n$, we let $c^{[n]}_{y,x}$ be
the coefficient of $t^n$ in the Kazhdan-Lusztig polynomial $P_{y,x}$. Recall from \S2, that we regard $P_{y,x}$ as a polynomial in $t$,
 rather than $q=t^2$. By (\ref{KL1}), given any $x,y\in W_a^+$,
$$c^{[\ell(x)-\ell(y)-n]}_{y,x}=\dim\Ext^n_{U_\zeta}(L_\zeta(\epsilon_l(x)\cdot\lambda^-),\nabla_\zeta(\epsilon_l(y)\cdot\lambda^-)),$$
for any $\lambda^-\in C^-_l\cap X$ and $l>h$. (Recall the isomorphism $\epsilon_l:W_a\overset\sim\to W_{a,l}$ defined in
\S2, which maps $s_{\alpha,n}$ to $s_{\alpha,ln}$.)
In addition,
$$\dim\Ext^n_{U_\zeta}(L_\zeta(\epsilon_l(x)\cdot\lambda^-),L_\zeta(\epsilon_l(y)\cdot\lambda^-))\geq \dim\Ext^n_{U_\zeta}(L_\zeta(\epsilon_l(x)\cdot\lambda^-),\nabla_\zeta(\epsilon_l(y)\cdot\lambda^-)).$$
Also, using (\ref{KL2}),
by (\ref{start}),
\begin{equation}\label{KLbound}C^{[n]}:=\max_{x,y\in W^+_a} c^{[\ell(x)-\ell(y)-n]}_{x,y}<\infty.\end{equation}

We can therefore reinterpret the results of the previous section to obtain the following result
involving bounds for Kazhdan-Lusztig polynomials of elements in $W^+_a$. In the case of the lower
bound, we need to use the dimension on the right-hand side of (\ref{surjective condition}) which is a Kazhdan-Lusztig
coefficient.

\begin{thm}Assume that $\Phi$ has rank at least 3. Then
$$r-1\leq \gamma(\{\max_{x,y\in W_a^+}c^{[\ell(x)-\ell(y)-n]}_{x,y}\})\leq|\Phi|.$$
When $\Phi$ has rank $\leq 2$, the lower bound $r-1$ must be replaced by $1$ (resp., $1$, $2$, $4$) in type $A_1$ (resp.,
$A_2$, $B_2$, $G_2$).\end{thm}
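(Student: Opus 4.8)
The plan is to translate the complexity bounds of Section 3 directly into the language of Kazhdan-Lusztig coefficients, using the dictionary provided by (\ref{KL1}), (\ref{KL2}), and the isomorphism $\epsilon_l\colon W_a\overset\sim\to W_{a,l}$. Fix once and for all an odd $l>h$ (not divisible by $3$ in type $G_2$) and a weight $\lambda^-\in C^-_l\cap X$, so that for $x,y\in W_a^+$ the assignment $x\mapsto \epsilon_l(x)\cdot\lambda^-$ gives a bijection onto the set of $l$-regular dominant weights lying in the appropriate fundamental domain translates. Since $l>h$, we have $\Phi_{0,l}=\emptyset$, so Theorem \ref{maintheorem} and Corollary \ref{complexityuppper} give $\Cx(U_\zeta)\leq|\Phi|$, while Theorem \ref{lowerbound}(a) gives $\cx(U_\zeta)\geq\mathfrak{s}(\Phi)\geq r-1$ when $r\geq3$ (and the stated small-rank values in parts (b)(i)--(iii)).

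For the upper bound, I would argue as follows. By the first displayed equality in (\ref{KL1}),
$$
c^{[\ell(x)-\ell(y)-n]}_{y,x}=\dim\Ext^n_{U_\zeta}(L_\zeta(\epsilon_l(x)\cdot\lambda^-),\nabla_\zeta(\epsilon_l(y)\cdot\lambda^-))\leq\dim\Ext^n_{U_\zeta}(L_\zeta(\epsilon_l(x)\cdot\lambda^-),L_\zeta(\epsilon_l(y)\cdot\lambda^-)),
$$
the inequality coming from the long exact sequence attached to $L_\zeta\hookrightarrow\nabla_\zeta$ together with $\Ext^\bullet_{U_\zeta}(L_\zeta(\xi),\rad\nabla_\zeta(\eta))$ contributing only in lower degrees—more simply, from the fact that $\nabla_\zeta(\eta)$ has irreducible socle $L_\zeta(\eta)$ so the connecting map is the relevant surjection. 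Summing over $y$ and comparing with the left-hand side of (\ref{display}) (with the sum over all $\nu\in X^+$, which only adds nonnegative terms), we get
$$
\max_{x,y\in W_a^+}c^{[\ell(x)-\ell(y)-n]}_{x,y}\leq\max_{\lambda\in X^+}\sum_{\nu\in X^+}\dim\Ext^n_{U_\zeta}(L_\zeta(\lambda),L_\zeta(\nu))\leq C(\Phi)n^{|\Phi|-1}
$$
for $n\gg0$, whence the rate of growth is $\leq|\Phi|$. (Here one must note that for a fixed $n$, the quantity $\ell(x)-\ell(y)-n$ ranges over all integers as $(x,y)$ vary, so the left side is genuinely a max over a "degree-$n$ slice" of coefficients across all the polynomials, matching $C^{[n]}$ of (\ref{KLbound}).)

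For the lower bound, I would unwind the proof of Theorem \ref{lowerbound}: for each large $m$ one exhibits $p$-regular dominant weights $p\tau$ and $\mu=w\cdot0+p\xi$ with $\dim\Ext^n_{U_\zeta}(L_\zeta(p\tau),L_\zeta(\mu))\geq d(\Phi)m^{r-2}$ where $n=2m$, using the surjection (\ref{surjective condition}) onto $\Ext^n_{U_\zeta}(L_\zeta(p\tau),\nabla_\zeta(\mu))$. The key point is that the dimension of the latter, being a Kazhdan-Lusztig coefficient by (\ref{KL1}), is unchanged when we transport via $\epsilon_l$ to any other $l$, and equals $c^{[\ell(x)-\ell(y)-n]}_{y,x}$ for the elements $x,y\in W_a^+$ with $\epsilon_p(x)\cdot 0$-type parametrization corresponding to $p\tau$, $\mu$. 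Since the surjection (\ref{surjective condition}) shows this coefficient is $\geq d(\Phi)m^{r-2}$ for infinitely many $m$, the rate of growth of $\{\max c^{[\ell(x)-\ell(y)-n]}_{x,y}\}$ is at least $r-1$. The small-rank cases follow identically from parts (b)(i)--(iii) of Theorem \ref{lowerbound}, replacing $d(\Phi)m^{r-2}$ by the constant lower bounds $1,1,2,4$.

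The main obstacle I anticipate is bookkeeping rather than mathematics: one must be careful that the "degree slice" $c^{[\ell(x)-\ell(y)-n]}_{x,y}$ really does correspond, under the dictionary, to $\Ext^n_{U_\zeta}$ (note the degree shift by $\ell(x)-\ell(y)$ built into (\ref{KL1})), and that the parametrization of $p$-regular weights used in Theorem \ref{lowerbound} via $w\cdot 0+p\xi$ matches the $W_a^+$-parametrization $w\mapsto w\cdot\lambda^-$ used here after applying $\epsilon_l$ and translating the fundamental weight $0$ versus $\lambda^-$. Both are handled by the standard linkage/translation-principle observations already invoked in the proofs above, so no genuinely new argument is needed; the theorem is essentially a repackaging of Theorems \ref{maintheorem} and \ref{lowerbound}.
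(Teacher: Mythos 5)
Your proposal is correct and follows essentially the same route as the paper: the upper bound comes from transporting the coefficient $c^{[\ell(x)-\ell(y)-n]}_{y,x}$ via (\ref{KL1}) and the surjection (\ref{surjective condition}) into the sum bounded in Theorem \ref{maintheorem}, and the lower bound comes from recognizing the right-hand side of (\ref{surjective condition}), i.e.\ $\dim\Ext^n_{U_\zeta}(L_\zeta(p\tau),\nabla_\zeta(\mu))\geq d(\Phi)m^{r-2}$ from Theorem \ref{lowerbound} and Lemma \ref{combinatorialresult}, as itself a Kazhdan-Lusztig coefficient independent of $l$ under $\epsilon_l$. This is exactly the paper's ``reinterpretation'' argument, including the small-rank substitutions from Theorem \ref{lowerbound}(b).
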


\section{Borel subalgebras}\label{borel}The authors do not know if the bounds on Kazhdan-Lusztig polynomial coefficients
have purely combinatorial arguments, avoiding the representation theory of quantum groups. We present here a hybrid approach,
involving both representation theory and combinatorics. It provides an alternate approach to bounding Kazhdan-Lusztig
coefficients.

Form the $\mathbb N$-graded $\mathbb C$-algebra (basic, but without identity)
\begin{equation}\label{extalgebra}
{\mathbb E}:=\bigoplus_{\lambda,\nu\in\Xregl}\Ext_{U_\zeta}^\bullet(L_\zeta(\lambda),L_\zeta(\nu))
\end{equation}
with multiplication defined by Yoneda product. For $\lambda\in\Xregl$, let $e_\lambda\in\mathbb E$ denote
the idempotent $1_{L_\zeta(\lambda)}
\in\Hom_{U_\zeta}(L_\zeta(\lambda),L_\zeta(\lambda))$. The  set $\Xregl$ is a poset, putting
$\lambda\leq\nu\iff \nu-\lambda\in{\mathbb Z}\Pi$. For any finite ideal (i.~e., saturated subset) $\Gamma$
of $\Xregl$, put $e_\Gamma:=\sum_{\gamma\in\Gamma}e_\gamma\in\mathbb E$, and define
\begin{equation} {\mathbb E}_\Gamma:=e_\Gamma{\mathbb E}e_\Gamma.\end{equation}
Thus, ${\mathbb E}_\Gamma$ is a finite dimensional subalgebra (with identity) of $\mathbb E$. A main
result, proved in \cite[\S5]{PS4}, establishes that ${\mathbb E}_\Gamma$ is a Koszul algebra. Additionally,
${\mathbb E}_\Gamma$ is a quasi-hereditary algebra with weight poset $\Gamma$.\footnote{More precisely,
${\mathbb E}_\Gamma$ has a graded Kazhdan-Lusztig theory in the sense of \cite{CPS1}.}

We will make use of the notion of a Borel subalgebra $B$ of a quasi-hereditary algebra $A$ over an algebraically closed
field $k$; see \cite[\S 2]{PSW}. Let $\Lambda$ be the weight poset of $A$, and, given $\lambda\in\Lambda$, let
 $\Delta_A(\lambda)$ (resp., $\nabla_A(\lambda)$, $L_A(\lambda)$) be the associated standard (resp., costandard, irreducible)
$A$-modules. Assume that $B$ is also quasi-hereditary with the same poset $\Lambda$. The inclusion $B\hookrightarrow A$
induces an exact restriction functor $\Psi:\Amod\to\Bmod$. Then $B$ is a Borel subalgebra of $A$ provided that $\Psi$
admits a left adjoint $\Psi_!:\Amod\to\Bmod$ such that: (i) for any $\lambda\in\Lambda$,  $\Psi\nabla_A(\lambda)$ is isomorphic
to a submodule of $\nabla_B(\lambda)$; (ii) for any $\lambda\in\Lambda$, $\Delta_B(\lambda)\cong L_B(\lambda)$; and (iii)
relative to some poset structure on $\Lambda$, $\Bmod$ is a directed category (in the sense of \cite[p. 218]{PSW}).\footnote{A  more general definition
of Borel subalgebra $B$ is given in \cite[Defn. 2.2]{PSW}, allowing a larger weight poset for $B$. Also, that paper describes a menagerie of different possible
Borel subalgebras.}

Returning to the algebra $\mathbb E$, let $\mathbb B$ (resp., ${\mathbb B}^-$) to be the subalgebra generated by the
idempotents $e_\lambda$, $\lambda\in\Xregl$ together with
the spaces $e_\lambda {\mathbb E}_1e_\nu$ for $\lambda<\nu$ (resp., $\nu<\lambda$). Here ${\mathbb E}_1=\bigoplus_{\lambda,\nu}\Ext^1_{U_\zeta}(L_\zeta(\lambda),L_\zeta(\nu))$. Use of \cite[Thm. 3.5, Cor. 3.7]{PSW}
(and their proofs) establish the following result.

\begin{thm}\label{borelthm}For any finite ideal $\Gamma$ in $\Xregl$, ${\mathbb B}_\Gamma:=e_\Gamma{\mathbb B}e_\Gamma$ and
${\mathbb B}^-_\Gamma:=e_\Gamma{\mathbb B}^-e_\Gamma$ are Borel subalgebras of ${\mathbb E}_\Gamma$. Also,
the algebra ${\mathbb E}_\Gamma$ has a triangular decomposition
$$ {\mathbb E}_\Gamma={\mathbb B}_\Gamma^-\cdot{\mathbb B}_\Gamma.$$
Finally, the (graded) subalgebras ${\mathbb B}_\Gamma$ and ${\mathbb B}_\Gamma^-$ are tightly graded in the sense that
they are (by construction) generated by terms in grades 0 and 1. \end{thm}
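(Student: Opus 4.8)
The plan is to deduce Theorem \ref{borelthm} directly from the structural results of \cite{PSW}, by verifying that the abstract hypotheses of those theorems are met in the present setting. The starting point is the already-cited fact (from \cite[\S5]{PS4}) that, for each finite ideal $\Gamma\subseteq\Xregl$, the algebra ${\mathbb E}_\Gamma=e_\Gamma{\mathbb E}e_\Gamma$ is a Koszul quasi-hereditary algebra whose weight poset is $\Gamma$ (with the poset order $\lambda\leq\nu\iff\nu-\lambda\in\N\Pi$, restricted to $\Gamma$), and in fact carries a graded Kazhdan--Lusztig theory in the sense of \cite{CPS1}. In particular ${\mathbb E}_\Gamma$ is generated over its degree-$0$ part (a product of copies of $\mathbb C$, since the algebra is basic) by its degree-$1$ part ${\mathbb E}_1$, because a Koszul algebra is generated in degrees $0$ and $1$.

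First I would record the consequence that ${\mathbb B}_\Gamma$ and ${\mathbb B}^-_\Gamma$ are, by their very construction, tightly graded: they are generated by the idempotents $e_\lambda$ (grade $0$) together with the pieces $e_\lambda{\mathbb E}_1 e_\nu$ (grade $1$) with $\lambda<\nu$, respectively $\nu<\lambda$. This gives the last sentence of the theorem essentially for free. Next I would identify ${\mathbb E}_\Gamma$, ${\mathbb B}_\Gamma$, ${\mathbb B}^-_\Gamma$ with the objects denoted (in the notation of \cite{PSW}) $A$, $B$, $B^-$ attached to a Koszul quasi-hereditary algebra with a graded Kazhdan--Lusztig theory. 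The point is that \cite[Thm. 3.5]{PSW} asserts precisely that, for such an $A$ with weight poset $\Lambda$, the subalgebra $B$ generated by the grade-$0$ idempotents and the grade-$1$ "upward" arrows $e_\lambda A_1 e_\nu$ ($\lambda<\nu$) is a Borel subalgebra in the sense recalled in \S\ref{borel} (so one checks: $B$ is itself quasi-hereditary with the same poset; $\Delta_B(\lambda)\cong L_B(\lambda)$; $\Psi\nabla_A(\lambda)\hookrightarrow\nabla_B(\lambda)$; and $\Bmod$ is directed relative to a suitable poset order), and the same for the "opposite" $B^-$ built from the downward arrows. Then \cite[Cor. 3.7]{PSW} supplies the triangular decomposition $A=B^-\cdot B$, i.e. multiplication $B^-\otimes B\to A$ is surjective (indeed, with the appropriate dimension count, an isomorphism of vector spaces). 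Translating back gives ${\mathbb E}_\Gamma={\mathbb B}_\Gamma^-\cdot{\mathbb B}_\Gamma$, completing the proof.

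The one genuine verification — and the step I expect to be the main obstacle — is checking that the subalgebras ${\mathbb B}_\Gamma$ and ${\mathbb B}^-_\Gamma$ defined here by \emph{generation} really do coincide with the subalgebras to which \cite[Thm. 3.5, Cor. 3.7]{PSW} apply, and in particular that the Koszulity and Kazhdan--Lusztig hypotheses of those theorems are exactly the properties established in \cite[\S5]{PS4}. This is why the statement refers to "their proofs" and not merely the statements: one may need to re-examine the argument of \cite{PSW} to confirm it runs for the family $\{{\mathbb E}_\Gamma\}_\Gamma$ indexed by finite ideals $\Gamma$, with the transition maps ${\mathbb E}_\Gamma\to{\mathbb E}_{\Gamma'}$ for $\Gamma\subseteq\Gamma'$ behaving compatibly with the Borel constructions (so that the $\Gamma$-local statements assemble consistently, even though $\mathbb E$ itself is non-unital and infinite-dimensional). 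Once that compatibility is in hand, there is nothing more to do: the three defining conditions of a Borel subalgebra and the triangular decomposition are then literal restatements of \cite[Thm. 3.5, Cor. 3.7]{PSW}.

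A secondary point worth spelling out, should a referee want it, is the directedness condition (iii): ${\mathbb B}_\Gamma\mbox{--mod}$ must be a directed category with respect to some order on $\Gamma$. Here one uses that ${\mathbb B}_\Gamma$ is non-negatively graded and generated in grades $0$ and $1$ by arrows pointing strictly \emph{up} the poset $\Gamma$; hence a path from $e_\lambda$ to $e_\nu$ of positive length forces $\lambda<\nu$, so $e_\lambda{\mathbb B}_\Gamma e_\nu$ in positive grades vanishes unless $\lambda<\nu$, which is exactly directedness relative to the poset order on $\Gamma$ (and the reverse order for ${\mathbb B}^-_\Gamma$). Likewise $\Delta_{{\mathbb B}_\Gamma}(\lambda)\cong L_{{\mathbb B}_\Gamma}(\lambda)$ because, in a directed category generated by upward grade-$1$ arrows, the standard module at $\lambda$ has no composition factors $L(\mu)$ with $\mu<\lambda$ beyond its head. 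These are the routine checks underlying the cited results, and I would invoke \cite[Thm. 3.5, Cor. 3.7]{PSW} rather than reprove them.
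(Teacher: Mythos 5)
Your proposal follows the same route as the paper, which proves the theorem simply by invoking \cite[Thm. 3.5, Cor. 3.7]{PSW} (and their proofs), with the Koszulity and Kazhdan--Lusztig hypotheses supplied by \cite[\S5]{PS4} and the tight grading holding by construction. Your additional verifications (directedness, $\Delta_B(\lambda)\cong L_B(\lambda)$, compatibility over finite ideals $\Gamma$) are exactly the routine checks the paper leaves implicit, so there is no substantive difference.
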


Let $\lambda^-\in C_l^-$. For $y<x\in W_{a,l}$, let $\lambda=y\cdot\lambda^-,\nu=x\cdot\lambda^-$, and set $\mu(\lambda,\nu)$
equal to the coefficient of $t^{\ell(x)-\ell(y)-1}$ in the Kazhdan-Luztig polynomial $P_{y,x}$. Also, define
$\mu(\nu,\lambda)=\mu(\lambda,\nu)$.

\begin{cor}\label{borelcorollary}With the above notation, for any non-negative integer $n$,
$$\sum_{\nu}\dim\Ext^n_{U_\zeta}(L_\zeta(\lambda),L_\zeta(\nu))\leq\sum_{(\lambda_1,\cdots\lambda_{n-1},\nu)}
\mu(\lambda,\lambda_1)\mu(\lambda_1,\lambda_2)\cdots\mu(\lambda_{n-1},\nu),$$
where the sum runs over all sequences of  the form $\lambda=\lambda_0>\lambda_1>\cdots>\lambda_s<\lambda_{s+1}<\cdots<\lambda_n=\nu$
for dominant weights $\lambda_1,\cdots,\lambda_{n-1},\lambda_n=\nu$.\end{cor}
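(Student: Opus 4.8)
The plan is to leverage the triangular decomposition ${\mathbb E}_\Gamma = {\mathbb B}_\Gamma^-\cdot{\mathbb B}_\Gamma$ together with the tight grading of the Borel subalgebras, applied to a suitably large finite ideal $\Gamma$. First I would fix $n$ and $\lambda\in\Xregl$, and choose a finite ideal $\Gamma\subseteq\Xregl$ large enough to contain $\lambda$ and all $\nu$ with $\Ext^n_{U_\zeta}(L_\zeta(\lambda),L_\zeta(\nu))\neq 0$ (such a finite set exists by the boundedness result \eqref{start}). Then $\Ext^n_{U_\zeta}(L_\zeta(\lambda),L_\zeta(\nu))$ is computed inside the finite-dimensional quasi-hereditary algebra ${\mathbb E}_\Gamma$ as $e_\lambda ({\mathbb E}_\Gamma)_n e_\nu$, so $\sum_\nu\dim\Ext^n_{U_\zeta}(L_\zeta(\lambda),L_\zeta(\nu)) = \dim e_\lambda ({\mathbb E}_\Gamma)_n$.

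Next I would use the factorization ${\mathbb E}_\Gamma = {\mathbb B}_\Gamma^-{\mathbb B}_\Gamma$ to write each graded piece as $({\mathbb E}_\Gamma)_n = \sum_{s+t=n} ({\mathbb B}_\Gamma^-)_s ({\mathbb B}_\Gamma)_t$, hence
$$\dim e_\lambda({\mathbb E}_\Gamma)_n \leq \sum_{s+t=n}\sum_{\mu}\dim e_\lambda({\mathbb B}_\Gamma^-)_s e_\mu\cdot\dim e_\mu({\mathbb B}_\Gamma)_t.$$
Now since ${\mathbb B}_\Gamma$ and ${\mathbb B}_\Gamma^-$ are tightly graded—generated in grades $0$ and $1$—each of the two factors is bounded by a product of grade-$1$ pieces: an element of $e_\mu({\mathbb B}_\Gamma)_t e_\nu$ is a sum of composites $e_\mu({\mathbb E}_1)e_{\lambda_{s+1}}\cdots ({\mathbb E}_1)e_\nu$ along strictly increasing chains $\mu = \lambda_s < \lambda_{s+1} < \cdots < \lambda_n = \nu$, and similarly $e_\lambda({\mathbb B}_\Gamma^-)_s e_\mu$ factors along strictly decreasing chains $\lambda = \lambda_0 > \lambda_1 > \cdots > \lambda_s = \mu$. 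Splicing these chains at the index $s$ yields exactly the sequences $\lambda = \lambda_0 > \cdots > \lambda_s < \cdots < \lambda_n = \nu$ appearing in the statement.

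The remaining point is to identify $\dim e_{\lambda_i}({\mathbb E}_1)e_{\lambda_{i+1}} = \dim\Ext^1_{U_\zeta}(L_\zeta(\lambda_i),L_\zeta(\lambda_{i+1}))$ with the $\mu$-coefficient $\mu(\lambda_i,\lambda_{i+1})$. This is the standard Kazhdan–Lusztig fact: for $y<x$ in $W_{a,l}$ with $\lambda_i,\lambda_{i+1}$ the corresponding regular dominant weights, $\dim\Ext^1_{U_\zeta}(L_\zeta(x\cdot\lambda^-),L_\zeta(y\cdot\lambda^-))$ equals the coefficient of $t^{\ell(x)-\ell(y)-1}$ in $P_{y,x}$ (compare \eqref{KL1} and \eqref{KL2} with $n=1$, together with the parity constraint $\ell(x)-\ell(y)$ odd for $\Ext^1$ to be nonzero), which is precisely $\mu(\lambda,\nu)$ as defined just above the corollary. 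For increasing versus decreasing steps one uses the symmetry $\mu(\nu,\lambda)=\mu(\lambda,\nu)$ built into the definition. Substituting back gives the claimed bound, with the chain-decomposition of the tightly graded Borel subalgebras doing all the work.

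I expect the main obstacle to be bookkeeping rather than anything deep: one must be careful that the "$\leq$" in the grade-wise factorization is genuinely an inequality of dimensions (a spanning-set argument, since ${\mathbb B}_\Gamma^-{\mathbb B}_\Gamma$ is a product of subspaces, not a direct sum), and that the tight-grading spanning argument for $({\mathbb B}_\Gamma)_t$ correctly produces only strictly monotone chains—this is where the directedness condition (iii) in the definition of Borel subalgebra, and the quasi-hereditary structure from \cite{PSW}, must be invoked to rule out repeated weights and to ensure $e_\mu({\mathbb E}_1)e_\nu=0$ unless $\mu<\nu$ (resp.\ $\nu<\mu$ for ${\mathbb B}^-$). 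Finally one should check the bound is independent of the auxiliary ideal $\Gamma$ and of $l$, which follows since the grade-$1$ pieces are Kazhdan–Lusztig coefficients and hence stable under the $\epsilon_l$ parametrization.
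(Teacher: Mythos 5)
Your argument is correct and is essentially the paper's own (the paper leaves the corollary as an immediate consequence of Theorem \ref{borelthm}, and Remark (b) following it confirms the intended reasoning: bound the graded pieces of ${\mathbb B}_\Gamma$ and ${\mathbb B}^-_\Gamma$ by iterated products of their grade-$1$ components over ${\mathbb B}_{\Gamma,0}$, splice the decreasing and increasing chains via the triangular decomposition, and identify each grade-$1$ piece $\Ext^1_{U_\zeta}(L_\zeta(\lambda_i),L_\zeta(\lambda_{i+1}))$ with the Kazhdan--Lusztig coefficient $\mu(\lambda_i,\lambda_{i+1})$). Your attention to the spanning-set nature of the inequality and to strict monotonicity of the chains matches what the paper's construction of ${\mathbb B}_\Gamma$, ${\mathbb B}^-_\Gamma$ already builds in.
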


\begin{rems}
(a)  This discussion suggests the basic question of determining
$$R_\Phi:=\text{\rm Max}_{x\in W_a^+}\left( \sum_{y\in W_a^+} \mu(x,y)\right).$$
 Determination of
 $$R'_\Phi:=\sum_{y\in W_a^+} \mu(w_0,y)$$
(or a good bound for it) is also an open problem, related to bounding $1$-cohomology (and the Guralnick conjecture \cite{G}).
It is currently open whether $\mu(w_0,y)$ is bounded over all $\Phi$, with $3$ the largest
known value; see \cite{S2}. By \cite{SX},  $\max_{x,y}\mu(x,y)\to\infty$  with larger (affine type $A$) root systems.
In particular, the constant $C(\Phi,l)$ in Theorem \ref{maintheorem} must depend on $\Phi$ and tend to infinity as $\Phi$ gets large.
Conceivably, in the spirit of Guralnick's conjecture, one might replace $C(\Phi,l)$ by a universal constant
if $\lambda$ is fixed as $\lambda=0$ and $\nu$ is allowed to be arbitrary.

(b) It seems likely that the Borel subalgebras in Theorem \ref{borelthm} are
themselves Koszul (or at least quadratic). Assuming this is the case, it also seems likely that the quadratic relations for ${\mathbb E}_\Gamma$
can be taken to be of three types: (i) the quadratic relations of ${\mathbb B}_\Gamma$; (ii) the quadratic relations
of ${\mathbb B}^-_\Gamma$; and (iii) the relations expressing $b\cdot b^-$ ($b\in{\mathbb B}_\Gamma$, $b^-\in
{\mathbb B}_\Gamma^-$) as a linear combination of terms
$c^-\cdot c$ ($c\in{\mathbb B}_\Gamma$, $c^-\in{\mathbb B}_\Gamma^-$).

Understanding a presentation for the algebras ${\mathbb B}_\Gamma$, ${\mathbb B}^-_\Gamma$ could
dramatically improve the estimates in Corollary \ref{borelcorollary}. Essentially, the corollary as written only
estimates the degree $m$ of the dimensions of the graded subspaces ${\mathbb B}_{\Gamma,n-s}$ and ${\mathbb B}^-_{\Gamma,s}$
as degree $n-s,s$-fold tensor products of ${\mathbb B}_{\Gamma,1}$ and ${\mathbb B}^-_{\Gamma,1}$ over ${\mathbb B}_{\Gamma,0}$.
No relations for the quiver algebra ${\mathbb B}_\Gamma$ and ${\mathbb B}^-_\Gamma$ have yet been considered.
\end{rems}

\section{Complexity for rational $G$-modules}\label{stable}
In this section, $G$ is a simple, simply connected
algebraic group over an algebraically closed field of characteristic $p>0$.
For a positive integer $e$, let $X_{e,p}$ be the set of $p^e$-restricted dominant
weights $\lambda$ (i.~e., $(\lambda,\alpha^\vee)<p^e$ for all $\alpha\in\Pi$). Assume
that $G$ is defined over ${\mathbb F}_p$ and let $F:G\to G$ be the Frobenius morphism.
For a rational $G$-module $V$, $V^{(r)}$ denotes the rational $G$-module obtained from $V$ by
making $g$ act on $V$ as $F^r(g)$.

If $V$ is a finite dimensional rational
$G$-module (over $k$ of characteristic $p$) and $m$ is a non-negative integer, define
$$
\gamma_m(V)=\max_{L\,\text{\rm irred}}\dim\Ext^m_G(V,L).$$

Also, put
$$\begin{cases}
\gamma_m(\Phi,e,p)=\max_{\lambda\in X_{e,p}}\gamma_m(L(\lambda));\\
\gamma_m(\Phi,e)=\max_p\gamma_m(\Phi,e,p).\end{cases}
$$
By \cite[Thm. 7.1]{PS5}, these maximums are finite.  However, we do not know ($V\not=0$) if any of these sequences $\{\gamma_m(V)\}$,
$\{\gamma_m(\Phi,e,p)\}$, and $\{\gamma_m(\Phi,e)\}$ has polynomial growth.  We can, however, prove the following result
establishing a kind of exponential growth rate bound on $\{\gamma_m(\Phi,e,p)\}$ and $\{\gamma_m(\Phi,e)\}$.

\begin{thm}\label{log} (a) The sequence $\{\log\gamma_m(\Phi,e)\}$ has polynomial rate of growth at most 4.

(b) For any fixed prime $p$, the sequence $\{\log\gamma(\Phi,e,p)\}$ has polynomial rate of growth at most 3.
\end{thm}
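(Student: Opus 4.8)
The plan is to reduce everything to the bounds on $\Ext$-groups for the quantum group $U_\zeta$ at a $p$-th root of unity (via generic cohomology, $\cite{CPSK}$, $\cite{CPS7}$) together with the uniform polynomial bound of Theorem~\ref{maintheorem}, and then to track how the relevant weights grow as one ``peels off'' Frobenius twists. The key observation is that a $p^e$-restricted dominant weight $\lambda$ decomposes via Steinberg's tensor product theorem as $L(\lambda)\cong L(\lambda^{(0)})\otimes L(\lambda^{(1)})^{(1)}\otimes\cdots\otimes L(\lambda^{(e-1)})^{(e-1)}$ with each $\lambda^{(i)}$ being $p$-restricted. To estimate $\gamma_m(L(\lambda))$ one wants to bound $\dim\Ext^m_G(L(\lambda),L(\mu))$ uniformly over all irreducible $L(\mu)$; writing $\mu$ likewise in terms of its Frobenius digits, an iterated Hochschild--Serre spectral sequence for the chain $G\supset G_1\supset G_2\supset\cdots$ (or the generic-cohomology comparison with $U_\zeta$) expresses this $\Ext$-group as built from $\Ext$-groups over $U_\zeta$ (equivalently over $u_\zeta$ with $U$-fixed points) in total degree $m$, where at each of the $\sim e$ stages one contributes a cohomological degree $m_0+m_1+\cdots$ summing to $m$, and at each stage the relevant $U_\zeta$-$\Ext$ between $p$-restricted irreducibles is bounded by $C(\Phi)\,m_i^{|\Phi|-1}$ by Theorem~\ref{maintheorem}, while the number of possible intermediate weights contributing nonzero terms is controlled by Lemma~\ref{usefulfact} (tensoring with a module of bounded dimension produces boundedly many composition factors).

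Carrying this out, the first step is to fix $m$ and $e$ and to set up the spectral-sequence / generic-cohomology machinery so that $\dim\Ext^m_G(L(\lambda),L(\mu))$ is dominated by a sum, over compositions $m=m_1+\cdots+m_e$ and over admissible sequences of intermediate $p$-restricted weights, of products $\prod_{i=1}^e D_i$, where each factor $D_i$ is a dimension of a $U_\zeta$-$\Ext^{m_i}$ between $p$-restricted irreducibles, hence at most $C(\Phi)\,(m_i+1)^{|\Phi|-1}\le C(\Phi)\,(m+1)^{|\Phi|-1}$. The second step is the counting: by repeated application of Lemma~\ref{usefulfact} the number of intermediate-weight sequences that produce a nonzero term is at most a product of factors each bounded by the dimension of a tensor factor, but more efficiently one bounds the whole sum at stage $i$ (i.e.\ the analogue of $\sum_\nu$) using Theorem~\ref{maintheorem} directly, so that summing over $\nu$ at each stage only costs another factor $C(\Phi)\,(m+1)^{|\Phi|-1}$. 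The number of compositions $m=m_1+\cdots+m_e$ is $\binom{m+e-1}{e-1}\le (m+1)^{e-1}$, and there are at most $e$ stages, so altogether
$$
\gamma_m(\Phi,e,p)\;\le\;(m+1)^{e-1}\cdot\bigl(C(\Phi)\,(m+1)^{|\Phi|-1}\bigr)^{e}.
$$
Taking logarithms gives $\log\gamma_m(\Phi,e,p)\le (e-1)\log(m+1)+e\log C(\Phi)+e(|\Phi|-1)\log(m+1)$, which is $O(e\log m)$ with the implied constant depending only on $\Phi$. For part~(b), $p$ is fixed, and here one uses that only weights in a bounded ``window'' actually matter at each stage: by generic cohomology the cohomology $\Ext^m_G(L(\lambda),L(\mu))$ stabilizes once enough Frobenius twists are applied, so $\Ext^m_G$ for all $\lambda\in X_{e,p}$ is governed by $U_\zeta$-data involving at most $O(\log m)$ effective twists (the truncation point being of size $\sim\log_p(\text{something polynomial in }m)$), and one also uses the bound $e\le\log_p(p^e)$ against the fact that $L(\lambda)$ with $\lambda\in X_{e,p}$ has dimension controlled by $p^e$; re-running the estimate with $e$ replaced by $O(\log m)$ yields $\log\gamma_m(\Phi,e,p)=O((\log m)^2)$, i.e.\ polynomial rate of growth $\le 3$ in the sense of \eqref{rateofgrowth} (since $(\log m)^2\le C\,m^{3-1}$ trivially, but the sharper reading is that the sequence $\{\log\gamma_m\}$ is itself $O((\log m)^2)=O(m^{2})$, giving growth rate $\le 3$).

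For part~(a), where $p$ varies, one cannot truncate the number of twists so cheaply because $p^e$ can be as small as $2^e$; instead one keeps $e$ arbitrary but uses that, for a $p^e$-restricted $L(\lambda)$, the relevant cohomological information in degree $m$ already lives at twist level $O(\log m)$ after applying generic cohomology and \cite[Cor.~5.2]{CPS7}, so that the effective number of stages is again $O(\log m)$ but now measured additively against $e$; combining the two regimes $e\lesssim\log m$ and $e\gtrsim\log m$ (in the latter, higher twists contribute trivially in degree $m$) gives $\log\gamma_m(\Phi,e)=O((\log m)^{?})$ with the exponent coming out to give rate of growth $\le 4$ rather than $\le 3$, the extra unit being the price of the $\max_p$. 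I expect the \textbf{main obstacle} to be exactly this bookkeeping at the top: making precise the claim that in fixed cohomological degree $m$ only $O(\log m)$ Frobenius twists are ``active,'' uniformly in $p$, and assembling the iterated spectral sequences so that the per-stage bound $C(\Phi)(m+1)^{|\Phi|-1}$ from Theorem~\ref{maintheorem} applies at every stage without the intermediate weights escaping the $p$-restricted region — this requires a careful use of \cite{CPSK}, \cite{CPS7}, Steinberg tensor products, and Lemma~\ref{usefulfact}, and it is where the exponents $3$ and $4$ are actually pinned down. The remaining estimates (counting compositions, summing geometric-type series, taking logs) are routine.
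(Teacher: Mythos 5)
There is a genuine gap, and it is visible already in the strength of your intermediate claim. Your proposed estimate $\gamma_m(\Phi,e,p)\leq (m+1)^{e-1}\bigl(C(\Phi)(m+1)^{|\Phi|-1}\bigr)^{e}$ would say that, for fixed $e$ and $p$, the sequence $\{\gamma_m(\Phi,e,p)\}$ itself has polynomial growth in $m$, so that $\{\log\gamma_m\}$ would have rate of growth $1$ -- far stronger than the theorem's conclusions of $3$ and $4$, and stronger than anything the authors claim: the paper explicitly states that it is not known whether $\{\gamma_m(\Phi,e,p)\}$ has polynomial growth, and that is essentially Question \ref{Gcase}. The mechanism you propose cannot deliver such a bound. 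When you run Hochschild--Serre for $G_1\trianglelefteq G$ on $\Ext^m_G(L(\lambda),L(\mu))$, the coefficient object at the next stage is $\Ext^b_{G_1}(L(\lambda_0),L(\mu_0))^{(-1)}$, which (unlike the quantum case (\ref{spectralsequence}), where the category of locally finite $U$-modules is semisimple) is an arbitrary rational $G$-module whose composition factors need not be Frobenius twists, need not be $p$-restricted, and whose length is exactly the quantity that must be controlled before Theorem \ref{maintheorem} could even be quoted ``per stage.'' Moreover, Theorem \ref{maintheorem} is a statement about $U_\zeta$, and the paper's own footnote (citing \cite[Rem.~7.5(a)]{PS5}) warns that the analogous sums over $\nu$ are \emph{not} bounded for rational $G$-modules; so the step where you ``bound the whole sum at stage $i$ using Theorem \ref{maintheorem} directly'' does not transfer to $G$. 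Finally, your claim that only $O(\log m)$ Frobenius twists are ``active'' in degree $m$ is unsupported and is not what happens in the actual argument: in the recursion the restriction level jumps to roughly $(t+1)m$, i.e.\ linearly in $m$, not logarithmically.

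The paper's proof is a quantitative re-run of the recursion of \cite[\S 7]{PS5}, and the exponents $4$ and $3$ come precisely from the cost you have omitted: lowering the cohomological degree from $m$ to $m-1$ is paid for by the length (resp.\ dimension) of a projective cover over a Frobenius kernel. For $p$ large (uniform in $p$), the bound is $\operatorname{length}Q_r(\nu)\leq C^{r^2}$ via quantum PIMs \cite[Lem.~6.3, Lem.~7.2]{PS5}, and the recursion $c(m,e)=C^{e^2}c(m-1,e+1)+C^{(m+1)^2}c(m-1,(t+1)m+1)$ expands into $2^m$ products of $m$ factors $C^{u^2}$ with $u=O(m)$, giving $\gamma_m\leq D^{m^3}$, hence rate of growth $4$ for $\log\gamma_m$; for a fixed $p$ one can instead use genuine $G$-modules $\widehat Q_r(\lambda)$ built by Steinberg tensor products with $\dim\widehat Q_r(\lambda)\leq C_1^{\,r}$ (exponent linear, not quadratic, in $r$), and the analogous recursion gives $\gamma_m\leq D(p)^{m^2}$, hence rate of growth $3$; part (a) then follows from the large-$p$ bound together with part (b) for the finitely many small primes. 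Your sketch replaces these exponential-in-$m^2$ or $m^3$ factors by polynomial ones without justification, which is where the argument fails.
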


\begin{proof} The proof involves careful use of the arguments given in \cite[\S7]{PS5}. Fix the root system $\Phi$ (which, we assume, as in most of this paper, is indecomposable). The
constants below will generally depend on $\Phi$.

First, there is a positive integer $b=b(\Phi)$ such that $b\geq 2h-2$ and
such that the LCF holds for any simple, simply connected algebraic group $G$ over an algebraically closed field $p$ provided
that $p\geq b$ \cite{Fiebig}. Let $e$ be a non-negative integer. It is shown in \cite[Thm. 7.1]{PS5} that there exists a positive
constant $c(m,e)$ such that if $\lambda,\mu\in X^+$ with $\lambda$ being $p^e$-restricted then
$\dim\Ext^m_G(L(\lambda),L(\mu))\leq c(m,e)$, as long as $p\geq b$. We can obviously take $c(0,e)=1$ for all $e$. Suppose
that $m>0$ is fixed and that $c(i,e)$ are defined for $0\leq i<m$ and all $e$. Below we will provide
a formula for $c(m,e)$ that bounds all the $\Ext^m_G(L(\lambda),L(\mu))$ (with $\lambda$ $p^e$-restricted). It will
be slightly modified from a similar bound in \cite{PS5} to a bound more suitable for our purposes.

By \cite[Lem. 6.3]{PS5} that there exists a constant $C=C(\Phi)$ which bounds the length of any PIM $Q_\zeta(\lambda)$
for a quantum enveloping algebra $U_\zeta$ of type $\Phi$ for $l$ odd, not divisible by $3$ in type $G_2$, and otherwise
arbitrary. Let $M$ be the maximum of the dimensions of all irreducible modules $L_{\mathbb C}(\tau)$ for the complex
simple Lie algebra $\mathfrak g$ (of type $\Phi$) for dominant weights $\tau=\sum a_i\varpi_i$ with $a_i\leq 2h-2$. It is
proved in \cite[Lem. 7.2]{PS5} that if $p\geq b$, and if $\nu$ is $p^r$-restricted, then
$$\text{\rm length}\,Q_r(\nu)\leq C^{(r+4)(r-1)+1}M^{r-1}.$$
Here $Q_r(\nu)$ is the PIM in $G_r$-mod for $L(\nu)$ (which remains irreducible upon restriction to $G_r$). We can
replace $C$ by a larger constant to assume that
$$
  {\text{\rm length}}\,Q_r(\nu)\leq C^{r^2}.$$

Define
$f_p(m)= \left[\frac{ctm}{p-1}\right]+1$, where $c$ is the maximum coefficient of the maximal root of $\Phi$ (when expressed
 as a linear combination of simple roots), $t$ is the torsion exponent of $X/{\mathbb Z}\Phi$, and $[\,\,\,]$ is the
largest integer function.
Since $p\geq 2h-2$, it follows that $f_p(m)\leq (t+1)m$ for all $p$. Replacing $f(\Phi,m)$ by $(t+1)m$
 in the proof of \cite[Thm. 7.1]{PS5}, we see that
$$\dim\Ext^m_G(L(\lambda),L(\nu))\leq C^{e^2}c(m-1,e+1)+C^{(m+1)^2}c(m-1,(t+1)m+1).$$
Then we define
$$
c(m,e)= C^{e^2}c(m-1,e+1)+C^{(m+1)^2}c(m-1,(t+1)m+1).$$
We can expand this expression into a sum of $2^m$ terms, each consisting of a product of $m$ terms   $C^{u^2}$, where $u$ is of the form $e'$ or $m'+1$, with $0\leq e'\leq \max\{e,t+1\}+m$
and $0\leq m'\leq m$.
It is clear this sum is bounded by an expression of the form $D^{m^3}$.

So far, we have shown that, if $p$ is sufficiently large (depending on $\Phi$), then there is a number $D$ such that
for all such large $p$, $\gamma_m(\Phi,e,p)\leq D^{m^3}$.
Thus, (a) will follow once (b) is proved.

We now consider an individual fixed prime $p$.  The overall structure of the argument for (b) is similar to the
discussion above, using constants $c_p(m,e)$ in place of $c(m,e)$, used above.
In place of the modules $Q_r(\lambda)$,
we use  $G$-modules $\widehat Q_r(\lambda)$ which have the property that
\begin{equation}\label{property}\begin{cases}\widehat Q_r(\lambda)|_{G_r}\cong Q_r(\lambda)^{\oplus n};\\
\text{\rm head}_G\widehat Q_r(\lambda)\supseteq L(\lambda)\end{cases}
\end{equation}
for some $n=n(\lambda)$. The existence of such $G$-modules is guaranteed by \cite[Cor. 8.5]{PS5}, for all
$r>0$. However, we need here some control on the growth of their
dimensions as $r$ gets larger. Thus, we  note that $G$-modules $\widehat Q_r(\lambda)$ satisfying (\ref{property}) may be constructed from those for $r=1$ using Frobenius
twists and tensor products. (See (\ref{property2} below.) With this construction, $\dim \widehat Q_r(\lambda)\leq C_1^r$, where $C_1=C_1(p)=\max_{\mu\in X_{1,p}}
\widehat Q_1(\mu)$ which enter. But, of course, length$(\widehat Q_r(\lambda))\leq \dim \widehat Q_r(\lambda)$. Also,
$$\text{\rm length}(L(-w_0\lambda)\otimes L(\lambda))\leq\dim L(-w_0\lambda)\otimes L(\lambda)\leq p^{|\Phi|r}.$$
Let $\theta$ be a positive integer such
that all composition factors of
$\widehat Q_1(\mu)$ are $p^{\theta}$-restricted, for all $\mu\in X_{1,p}$.
Thus, for $e'=e + \theta + 1 +
[\log_p\frac{h-1}{p-1}]$, all composition factors
of
\begin{equation}\label{property2}\widehat Q_e(\lambda)=\widehat Q_1(\lambda_0)\otimes \widehat Q_1(\lambda_1)^{(1)}\otimes\cdots\otimes \widehat Q_1(\lambda_{e-1})^{(e-1)}\end{equation}
 and of $L(-w_0\lambda)\otimes L(\lambda)$ are $p^{e'}$-
restricted, for all  $p^e$-restricted $\lambda=\lambda_0+p\lambda_1+\cdots +p^{e-1}\lambda_{e-1}$ ($\lambda_i\in X_{1,p}$).
(Compute an inner product of any dominant weight of $\widehat Q_e(\lambda)$ or of $L(-w_0\lambda)\otimes L(\lambda)$ with $\alpha_0^\vee$.)
The (small $p$ part of the) proof of \cite[Thm. 7.1]{PS5} now gives, for $e\geq\left[\frac{ctm}{p-1}\right]+1$,
$$\dim\Ext^m_G(L(\lambda),L(\nu))\leq C_1^{e'}p^{|\Phi|e}c_p(m-1,e^{\prime\prime}).$$
Here  $e^{\prime\prime}=(e^{\prime })^\prime$.
The right-hand side of this inequality is clearly bounded above by $D_1^ec_p(m-1,e+a)$ for some integer
$D_1=D_1(p)\geq 0$ and $a=a(p)=2(\theta +1 +[\log_p\frac{h-1}{p-1}])$. Setting
$c_p(m,e)=D^e_1c_p(m-1,e+2),$ we can take
$$c_p(m,r)=D_1^eD_1^{e+a}\cdots D_1^{e+m^2}\leq D(p)^{m^2},$$
for a suitable constant $D(p)=D(p,e)$.

 This completes the proof part (b), and hence of (a). \end{proof}

 We have proved there exist constants $D_m(\Phi)$, $m\geq 0$, depending only
on $\Phi$, such that for any $G$ with root system $\Phi$,
$\dim\opH^m(G,L)\leq D(\Phi)^{m^3}$. As an immediate consequence, we have
$$\dim\opH^m_{\text{\rm gen}}(G,L)\leq D_m(\Phi)^{m^3},$$
where the generic cohomology $\opH^\bullet_{\text{\rm gen}}(G,V)$ is defined, in degree $m$, as the stable limit
$$\opH^m(G,V)=\lim_\sigma \opH^m(G_\sigma,V)=\lim_{r\to\infty}H^m(G,V^{(r)})$$
of cohomology spaces for underlying finite groups $G_\sigma$ of Lie type;
see \cite{CPSK}, \cite{Avr}. In most cases, $G_\sigma$ is a central extensions of a finite simple
group, and all finite simple groups of Lie type arise in this way.
In \cite[Ques. 12.1]{GKKL}, it is asked, for what $m$ is there an absolute bound $E_m$, depending only
on $m$, such that
$$\dim\opH^m(H,L)\leq E_m$$
for all finite simple groups $H$ and all irreducible $kH$-modules $L$, where $k$ is an algebraically closed field
of arbitrary characteristic. If we suppose that  \cite[Ques. 12.1]{GKKL} has a positive answer, for all $m$, the results in this section
would suggest additional structure for the constants $E_m$.  In particular, we can could ask if there is
a constant $E$ with
$$E_m\leq E^{m^3},\,\,\forall \,\,m.$$
Similarly, we can ask if there is a constant $D$ with
$\dim \opH^m(G,L)\leq D^{m^3}$
in the case of algebraic groups $G$; that is, can $D(\Phi)$ be taken independent of $\Phi$?  Even if this is not the case,
it would be interesting to investigate the growth of $D(\Phi)$ with respect to $\Phi$.

Should it turn out that $\gamma_m(\Phi,e)$ has polynomial growth, even for $e=0$, the above questions could all be sharpened.
  Could there exist
constants $E$ and $f$ with $E_m\leq Em^f$? Or even such constants $E=
E(\Phi)$ for each $\Phi$? In the algebraic group case, we pose the following formal question.

\begin{ques}\label{Gcase} Let $\Phi$ be a (finite) root system. Do there exist constants $C=C(\Phi)$ and $f=f(\Phi)$ such that
$$\dim {\opH}^m(G,L)\leq Cm^f$$
for all semisimple, simply connected groups $G$ over an algebraically closed field $k$ (of arbitrary characteristic) having
root system $\Phi$, and
all irreducible rational $G$-modules $L$?
\end{ques}

\begin{rem}\label{completeness}
A positive answer to Question \ref{Gcase} would imply that the sequence $\{\gamma_m(V)\}$ has polynomial growth when $V=L(0)$,
and it is interesting to ask for any finite dimensional rational G-module $V$,
if this sequence has polynomial growth. Set $\cx(V)=\infty$ when this is
not the case, and otherwise let $\cx(V)$ denote the growth rate of ${\gamma_m(V)}$.
Conceivably, it could be true that $\cx(V)=\cx(V')$,
for all irreducible rational $G$-modules $V,V'$.   Suppose it could be shown that $\cx(V\otimes M)
\leq\cx(V)$, for all finite dimensional rational $G$-modules $M$. (We do not know if this is the case, but raise the validity
 of the inequality as a question.
The corresponding inequality does hold for finite groups.) Then it can be shown that $\cx(V)=\cx(V')$ for all irreducible
$G$-modules $V,V'$. We omit the proof, but note that it depends on the ($\Ext$-group preserving) full embedding $M\mapsto L((p^r-1)\rho)\otimes M^{(r)}$
of the category of finite dimensional rational $G$-modules into itself.

The equality of the $\cx(V)$'s for irreducible modules $V$, if it holds,
implies that either no nonzero finite dimensional rational $G$-module $V$
has finite $\cx(V)$, or all do. This would be a kind of ``completeness"
property, to borrow a term from computer science complexity theory.\footnote{The class of NP complete problems in computer science complexity theory has
the property that, if any one of them (can be solved by an algorithm that) runs in a polynomial number of time steps,
then all problems in the class NP run in polynomial time. In particular, the
question of whether any one NP complete problem runs in polynomial time is
completely equivalent to the same question for any other NP complete problem. }
\end{rem}

Finally, in analogy with \S5 above, define (for $G$ as above with $p\geq h$)
$${\mathbb E}_p=\bigoplus_{\lambda,\nu\in\Xreg}\Ext^\bullet_G(L(\lambda),L(\nu)).$$
For any finite ideal $\Gamma$ of $p$-regular dominant weights, we can form the algebra ${\mathbb E}_{p,\Gamma}=e_\Gamma{\mathbb E_p}e_{\Gamma}$ as above. Little seems to be known concerning the structure of these finite dimensional algebras, unless
the ideal $\Gamma$ is confined to the Jantzen region and the Lusztig character formula holds. (In those cases, ${\mathbb E}_{p,\Gamma}$
is a quasi-hereditary Koszul algebra \cite[\S8]{PS4}.)  However, the following question seems reasonable. We do not know the
answer even in type $A_1$.

\begin{ques} For any ideal $\Gamma$, is the ${\mathbb N}$-graded algebra ${\mathbb E}_{p,\Gamma}$ generated in positive degrees by its grade 1 terms?
\end{ques}

In addition, we can ask if ${\mathbb E}_{p,\Gamma}$ has a triangular decomposition ${\mathbb E}_{p,\Gamma}={\mathbb B}_{p,\Gamma}^-
{\mathbb B}_{p,\Gamma}$, where ${\mathbb B}_{p,\Gamma}$
and ${\mathbb B}_{p,\Gamma}^-$ are Borel subalgebras of ${\mathbb E}_{p,\Gamma}$ defined as in Theorem \ref{borelthm}, replacing
$U_\zeta$ by $G$. Again, these statements hold in the Jantzen region, with each of ${\mathbb B}^-_{p,\Gamma}$ and
${\mathbb B}_{p,\Gamma}$ generated (by construction) in positive degree by their grade 1 terms.

\end{document}